\newcommand{\Cov}[2]{\mathbb{C}\text{ov}\left( #1, #2 \right)}
\newcommand{\Var}{\mathbb{V}\text{ar}}
\newcommand{\norm}[1]{\left\lVert #1 \right\rVert}
\newcommand{\1}[1]{\mathbbm{1}_{#1}}
\DeclareMathOperator*{\argmin}{arg\;min}
\DeclareMathOperator*{\argmax}{arg\;max}
\newcommand{\Besov}[3]{{\cal B}_{#1,#2}^{#3}}
\newcommand{\innerp}[2]{ \langle{#1} , {#2}\rangle }
\newcommand{\PP}{\ensuremath{{\mathbb P}}}
\newtheorem{definition}{Definition}
\newtheorem{lemma}{Lemma}
\newtheorem{theorem}{Theorem}
\newdefinition{remark}{Remark}
\journal{Applied and Computational Harmonic Analysis}
\begin{document}
\begin{frontmatter}
\title{Multichannel Deconvolution with Long Range Dependence: Upper bounds on the $L^p$-risk $(1 \le p < \infty)$}

    \author[rafal]{Rafal Kulik}
    \ead{rkulik@uottawa.ca}
    \address[rafal]{Department of Mathematics and Statistics, University of Ottawa, 585 King Edward Avenue, Ottawa ON K1N 6N5, Canada}
    \author[fanis]{Theofanis Sapatinas}
    \ead{{fanis@ucy.ac.cy}}
    \address[fanis]{Department of Mathematics and Statistics,University of Cyprus, P.O. Box 20537,  CY 1678 Nicosia, Cyprus}
    \author[justin]{Justin Rory Wishart}
    \ead{j.wishart@unsw.edu.au}
    \address[justin]{Department of Mathematics and Statistics,University of New South Wales,
    Sydney, NSW  2052,Australia}

    \begin{abstract}
        We consider multichannel deconvolution in a periodic setting with long-memory errors under three different scenarios for the convolution operators, i.e., super-smooth, regular-smooth and box-car convolutions. We investigate global performances of linear and hard-thresholded non-linear wavelet estimators for functions over a wide range of Besov spaces and for a variety of loss functions defining the risk. In particular, we obtain upper bounds on convergence rates using the $L^p$-risk $(1 \le p < \infty)$. Contrary to the case where the errors follow independent Brownian motions, it is demonstrated that multichannel deconvolution with errors that follow independent fractional Brownian motions with different Hurst parameters results in a much more involved situation. An extensive finite-sample numerical study is performed to supplement the theoretical findings.

        \vspace{2mm}

        \begin{keyword}
            Besov Spaces\sep Brownian Motion\sep Deconvolution\sep Fourier Analysis\sep Fractional Brownian Motion\sep Meyer Wavelets\sep Multichannel Deconvolution\sep Thresholding\sep Wavelet Analysis

            \MSC[2010] 62G08 \sep 62G05 \sep 62G20

        \end{keyword}

    \end{abstract}
\end{frontmatter}

\section{Introduction}
\label{sec:intro} We study multichannel deconvolution with errors following independent fractional Brownian motions (fBms). More specifically, consider the problem of recovering $f(\cdot) \in L^2(T)$, $T=[0,1]$, on the basis of observing the following noisy convolutions, with known blurring functions $g_\ell(\cdot)$,
\begin{equation}
    dY_{\ell}(t) = K_\ell f(t)dt + \frac{\sigma_\ell }{n^{\alpha_\ell/2}} dB_{H_\ell}
    (t),\ \ \ t \in T, \quad \ell=1,2,\ldots,M, \label{eq:multchan-lm}
\end{equation}
where $\sigma_\ell$ are known positive constants and the convolution operators $K_\ell$ are defined as
\begin{equation}
    K_\ell f(t):=f*g_\ell(t) = \int_T g_\ell(t-x)f(x) dx,\ \ \ t \in T,  \quad
    \ell=1,2,\ldots,M. \label{eq:conv2}
\end{equation}
Here, $B_{H_\ell}(\cdot)$ are independent standard fBms with {\em Hurst} parameters $H_\ell=1-\alpha_\ell/2 \in [1/2,1)$, $\ell=1,2,\ldots,M$; that is, for each $\ell=1,2,\ldots,M$; $B_{H_\ell}(\cdot)$ is a Gaussian process with zero mean and covariance function
\[
 \mathbb{E} (B_{H_\ell}(s)B_{H_\ell}(t))  = \frac{1}{2} \big(|s|^{2H_\ell}+|t|^{2H_\ell}-|t-s|^{2H_\ell}\big), \quad
    s,t \in T, \quad  \ell=1,2,\ldots,M.
\]
The case where $M=1$ corresponds to the fractional Gaussian noise model that can also be viewed as an approximation to the nonparametric regression model with long-range dependence (LRD) (cf. \cite{Wang-1996,Wang-1997}). On the other hand, the case $H_\ell=1/2$, $\ell=1,\ldots,M$; becomes the {\it multichannel} deconvolution with independent standard Brownian motion errors. This model has received attention in studies by \cite{DeCanditiis-Pensky-2006,Pensky-Sapatinas-2009,Pensky-Sapatinas-2010} and \cite{Pensky-Sapatinas-2011}.

We consider the following scenarios for the convolution operators $K_\ell$, $\ell=1,2,\ldots,M$; given by \eqref{eq:conv2} in the Fourier domain where $\widetilde f(m) \coloneqq \int_\mathbb{R} e^{-2\pi i m x} f(x) \, dx$.
\begin{enumerate}
    \item {\em Smooth} convolutions such that, in the Fourier domain,
  \begin{equation}
    \label{eq:K.smooth.M}
    |\widetilde{K_\ell f}(m)| \asymp\,
     |m|^{-\nu_\ell} \exp{ \left\{ - \theta_\ell |m|^{\beta_\ell}\right\} }\,|\widetilde{f}(m)|,
  \end{equation}
  where $m \in \mathbb{R}$, $\ell=1,2,\ldots,M;$ $\beta_\ell > 0$ and $\theta_\ell \ge 0$. In particular, $\nu_\ell \in \mathbb{R}$ if $\theta_\ell > 0$ and $\nu_\ell > 0$ if $\theta_\ell = 0$. The key parameter is $\theta_\ell$, controlling the severity of the decay. The so-called super-smooth deconvolution or exponential decay occurs when $\theta_\ell > 0$ and the regular-smooth or polynomial case occurs when $\theta_\ell = 0$. In the regular-smooth case, each $\nu_\ell >0$ corresponds to the so-called {\em degree of ill-posedness} (DIP) index with $\nu_\ell=0$ representing the {\em direct} (or {\em well-posed}) case.
  \item {\em Box-car} convolutions such that, in the Fourier domain,
    \begin{equation}
        \label{eq:K.box.M} |\widetilde{K_\ell f}(m)|=\frac{\sin (\pi m c_\ell)}{\pi m c_\ell}\,|\widetilde{f}(m)|, \quad m \in \mathbb{R}, \quad
        \ell=1,2,\ldots,M;
    \end{equation}
    where $c_\ell >0$ for each $\ell=1,2,\ldots,M$.
\end{enumerate}
Deconvolution is a common problem in many areas of signal and image processing which include, for instance, light detection and ranging (LIDAR) remote sensing and reconstruction of blurred images. LIDAR is a laser device which emits pulses, reflections of which are gathered by a telescope aligned with the laser. The return signal is used to determine the distance and the position of the reflecting material. However, if the system response function of the LIDAR is longer than the time resolution interval, then the measured LIDAR signal is blurred and the effective accuracy of the LIDAR decreases. This loss of precision can be corrected by deconvolution. In practice, measured LIDAR signals are corrupted by additional noise which renders direct deconvolution impossible. Moreover, if $M\geq2$ (finite) LIDAR devices are used to recover a signal, then we talk about a {\em multichannel} deconvolution problem. The case where $M \geq 2$ in \eqref{eq:multchan-lm}--\eqref{eq:conv2} and $H_\ell=1/2$, $\ell=1,\ldots,M$; i.e., the problem of considering systems of convolution equations with independent errors, was first considered by \cite{Casey-Walnut-1994} in order to evade the ill-posedness of the standard deconvolution model.

In the standard Brownian motion error case, a statistical use of the above idea was investigated by \cite{DeCanditiis-Pensky-2004:JRSSB-Dis,DeCanditiis-Pensky-2006} who proposed adaptive wavelet thresholding estimators. In particular, if $K_\ell$ are regular-smooth convolutions, they showed that an {\em adaptive} wavelet thresholding estimator based on the output from the $M$ channels ``picks'' the convergence rate according to ``the best'' operator $K_\ell$, i.e., the one with the smallest $\nu_\ell$, $\ell=1,2,\ldots,M$. Consequently, adding more channels does not improve the convergence rate of the suggested estimator. On the other hand, if $K_\ell$, $\ell=1,2,\ldots,M$; are box-car convolutions, they showed that adding new channels improves the convergence rate. To be more specific, \cite{DeCanditiis-Pensky-2006} showed, in particular, that the true signal $f(\cdot)$ can be recovered  with accuracy (within a logarithmic factor),
\[
    n^{-2s/(2s+2\nu+1)} \qquad \text{and} \qquad n^{-2s/(2s+(2M+1)/M+1)},
\]
in the regular-smooth and box-car convolutions, respectively. Here, $s>0$ is the smoothness of the underlying signal, $\nu=\min\{\nu_1,\ldots,\nu_M\}$ and the {\em accuracy of estimation} is measured with respect to an upper bound on the $L_2$-risk. In \cite{DeCanditiis-Pensky-2006} the authors did not consider the super-smooth convolutions.

However, real data do not always meet the independence assumption and scientist in diverse fields have observed empirically that correlations between observations that are far apart decay to zero at a slower rate than one would expect from independent data (or, in more general situation, where one deals with short-range dependent data). These fields include astronomy, agronomy, economics chemistry, etc. (see, e.g., \cite{Beran-et-al-2013}).

Therefore, our aim is to study the multichannel deconvolution with errors following fBms. In fact, we show that the situation in this case is much more involved than in the case where the errors follow standard Brownian motions. In particular, we show that in multichannel deconvolution with errors following fBms, the true signal $f(\cdot)$ can be recovered with respect to an upper bound on the $L^p$-risk ($1 \le p < \infty$) with accuracy,
\[
    n^{-s\alpha_{\ell_*}p/(2s+2\nu_{*}+1)}, \qquad (\log n)^{-ps^*/\beta_{\ell_*}} \qquad \text{and} \qquad n^{-s\alpha_*p/(2s+2\widetilde \nu_{*}+1)}
\]
for regular-smooth, super-smooth and box-car deconvolutions respectively (the regular smooth and box-car scenarios are within a logarithmic factor). The parameters in the case of smooth (both regular-smooth and super-smooth) convolutions are defined with
\begin{equation}
    \label{eq:optimal.smooth.channel}\ell_* \coloneqq \argmin_{1 \le \ell \le M} n^{-\alpha_\ell}2^{(\alpha_\ell + 2\nu_\ell)}e^{2\theta_\ell 2^{\beta_\ell}}.
\end{equation}
for the optimal channel and $\nu_{*}$ is defined for the regular-smooth case as
\begin{equation}
    \label{eq:smooth.optimal.nu}
    \nu_*\coloneqq \nu_{\ell_*}+\frac{\alpha_{\ell_*}}{2}-\frac{1}{2}.
\end{equation}
For the case of box-car convolutions the parameters are defined with
\begin{gather}
    \label{eq:box.car.alpha} \alpha_*\coloneqq \min\{\alpha_1,\ldots,\alpha_M\},\quad \text{and}\quad \alpha^* \coloneqq \max\{\alpha_1,\ldots,\alpha_M\},\\
    \label{eq:box.car.optimal.nu}\widetilde \nu_*\coloneqq \frac{2M+1}{2M}+\frac{\alpha^*}{2}-\frac{1}{2}.
\end{gather}

Consequently, the conclusions of \cite{DeCanditiis-Pensky-2006} are
no longer valid here. Even in case of $M=2$, there are different
possibilities for the {\it best scenario}, depending on a
complicated relationship between $s$, $M$, $\nu_\ell$, $\alpha_\ell$, $\theta_\ell$ and $\beta_\ell$, as we illustrate in Section \ref{sec:4}.

\subsection{Modification of the {\tt WaveD} method}

Along with theoretical results, a comparison with the existing {\tt WaveD} method is presented to examine the effect of LRD and multiple channels. Let us compare our modification of the {\tt WaveD} to the standard {\tt R}-package {\tt WaveD} of \cite{Raimondo-Stewart-2007}. In particular, the four signals, {\tt LIDAR}, {\tt Doppler}, {\tt Bumps} and {\tt Blocks} are used as candidate signals in estimation.

For mild levels of LRD ($1/2 < \alpha < 1$) there is not too much difference between the both approaches. However, an improvement is visible for a stronger dependence ($0 < \alpha < 1/2$), as illustrated on \autoref{fig:1} and \autoref{fig:2}. For the parameters $\alpha=0.5$, $\nu=0.5$ and $M = 2$, in the third row, a signal is reconstructed using the proposed multichannel method while the fourth row shows the standard {\tt WaveD} approach using the best channel.

Clearly, the standard {\tt WaveD} approach does not remove artificial noise, which is due to LRD (cf. \autoref{fig:1}). We modify the {\tt WaveD} approach and achieve more reliable estimation by appropriately modified tuning parameters and also truncating the wavelet expansion at an appropriate lower scale level. This truncation is particularly important when there is severe LRD but does not universally yield better estimates (cf. \autoref{fig:2}) and is discussed in more depth in the numerical section later.
\begin{figure}
    \centering
    \begin{minipage}{\columnwidth}
        \subfloat[Doppler signal]{\label{Figure1a}
        \includegraphics[height=0.24\textwidth]{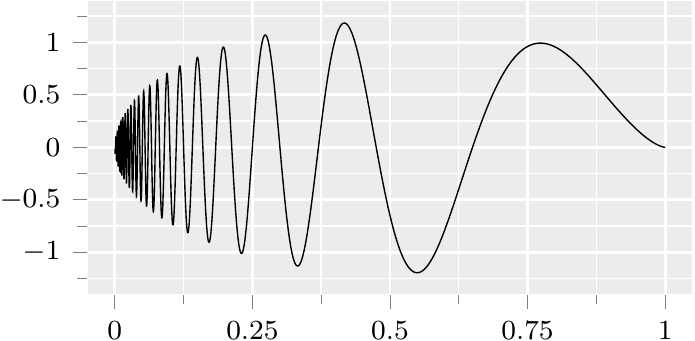}
        }
        \subfloat[LIDAR signal]{\label{Figure1b}
        \includegraphics[height=0.24\textwidth]{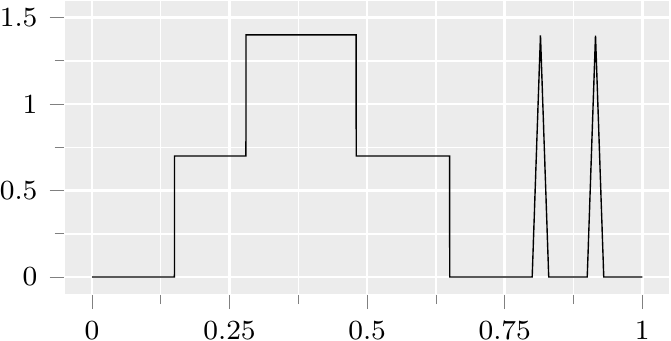}
        }
    \end{minipage}
    \begin{minipage}{\columnwidth}
        \subfloat[Doppler blurred and noisy]{\label{Figure1c}
        \includegraphics[height=0.24\textwidth]{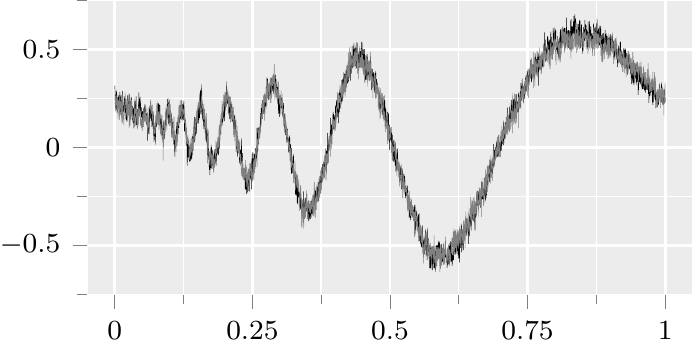}
        }
        \subfloat[LIDAR blurred and noisy]{\label{Figure1d}
        \includegraphics[height=0.24\textwidth]{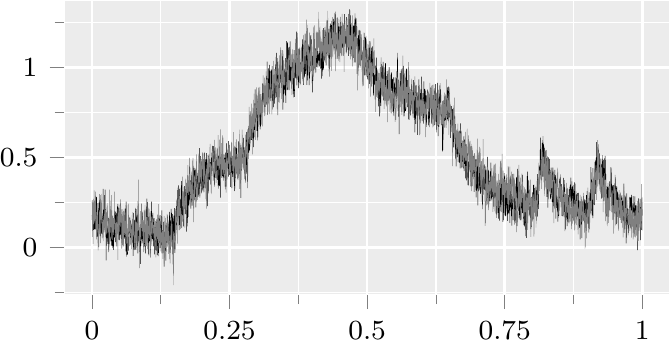}
        }
    \end{minipage}
    \begin{minipage}{\columnwidth}
        \subfloat[Doppler reconstruction]{\label{Figure1e}
        \includegraphics[height=0.24\textwidth]{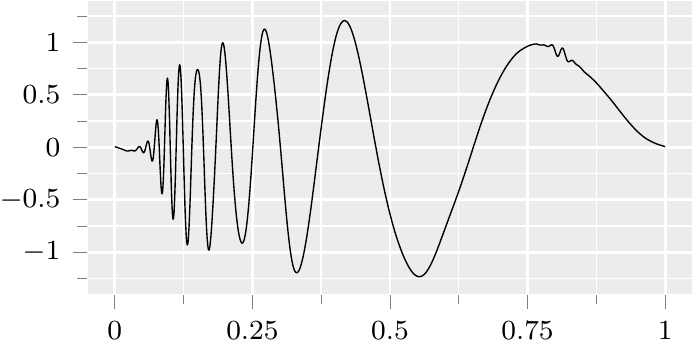}
        }
        \subfloat[LIDAR reconstruction]{\label{Figure1f}
        \includegraphics[height=0.24\textwidth]{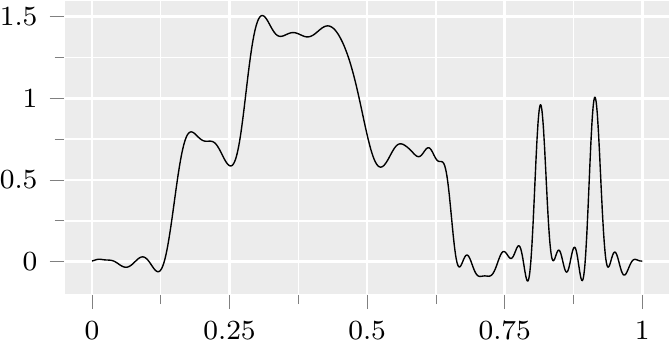}
        }
    \end{minipage}
    \begin{minipage}{\columnwidth}
        \subfloat[Doppler {\tt WaveD} reconstruction]{\label{Figure1g}
        \includegraphics[height=0.24\textwidth]{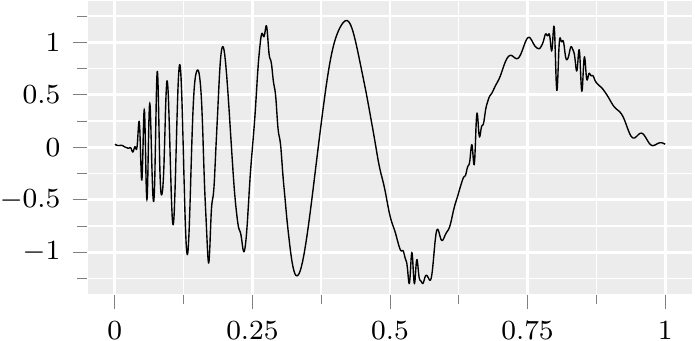}
        }
        \subfloat[LIDAR {\tt WaveD} reconstruction]{\label{Figure1h}
        \includegraphics[height=0.24\textwidth]{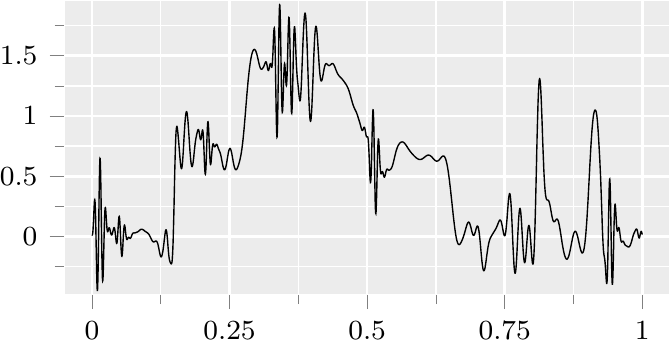}
        }
    \end{minipage}
\caption{\small Top row: original Doppler and LIDAR signal; 2nd row: corresponding blurred and noisy signals, $\nu=0.5$, $\alpha=0.5$ (black line: first channel; grey line: second 
channel); 3rd row: reconstructed signal using the proposed method with $M = 2$ channels; 4th row: reconstructed signal using the standard R-package {\tt WaveD} using the best channel 
(see \eqref{eq:ell.star.estimator}, for the notion of `best channel').}
    \label{fig:1}
\end{figure}

\begin{figure}
    \centering
    \begin{minipage}{\columnwidth}
        \subfloat[Bumps signal]{\label{Figure2a}
        \includegraphics[height=0.24\textwidth]{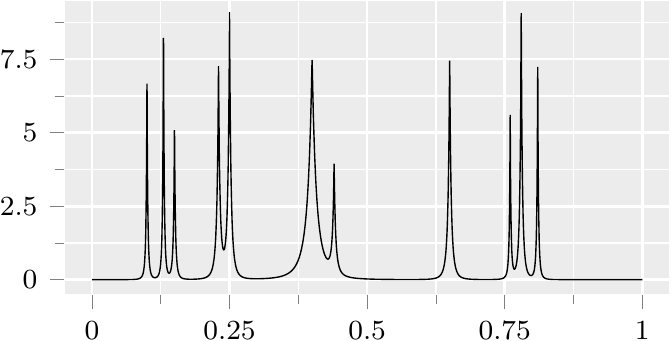}
        }
        \subfloat[Blocks signal]{\label{Figure2b}
        \includegraphics[height=0.24\textwidth]{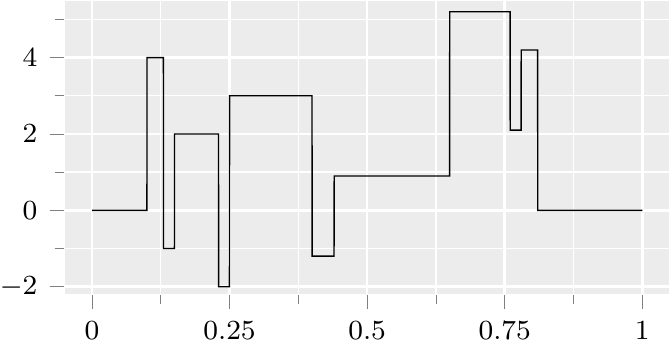}
        }
    \end{minipage}
    \begin{minipage}{\columnwidth}
        \subfloat[Bumps blurred and noisy]{\label{Figure2c}
        \includegraphics[height=0.24\textwidth]{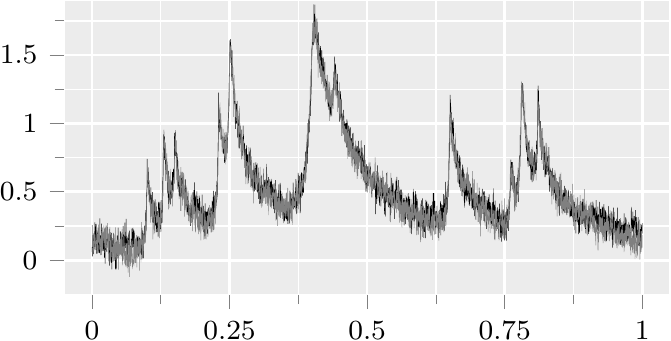}
        }
        \subfloat[Blocks blurred and noisy]{\label{Figure2d}
        \includegraphics[height=0.24\textwidth]{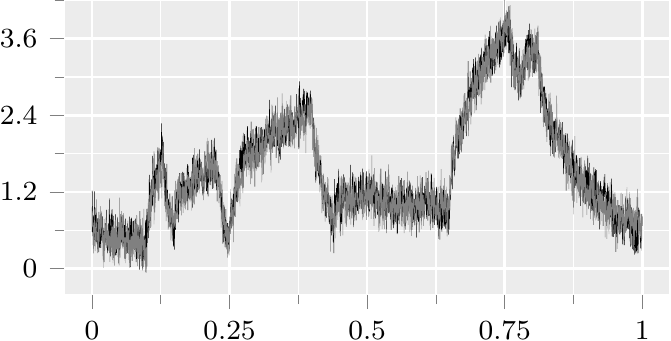}
        }
    \end{minipage}
    \begin{minipage}{\columnwidth}
        \subfloat[Bumps reconstruction]{\label{Figure2e}
        \includegraphics[height=0.24\textwidth]{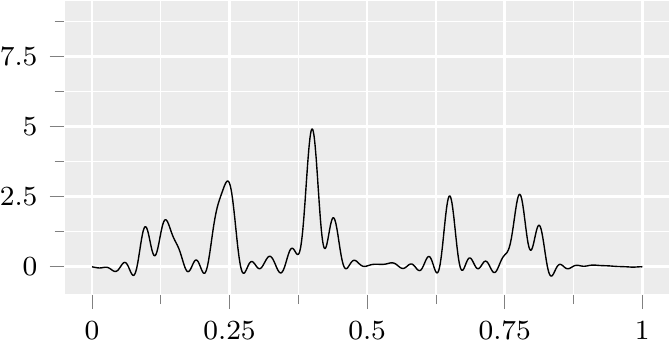}
        }
        \subfloat[Blocks reconstruction]{\label{Figure2f}
        \includegraphics[height=0.24\textwidth]{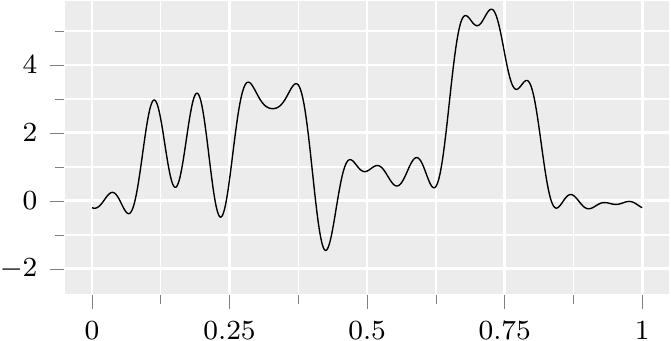}
        }
    \end{minipage}
    \begin{minipage}{\columnwidth}
        \subfloat[Bumps {\tt WaveD} reconstruction]{\label{Figure2g}
        \includegraphics[height=0.24\textwidth]{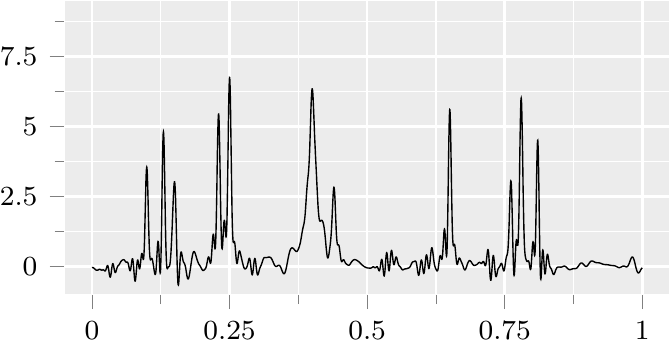}
        }
        \subfloat[Blocks {\tt WaveD} reconstruction]{\label{Figure2h}
        \includegraphics[height=0.24\textwidth]{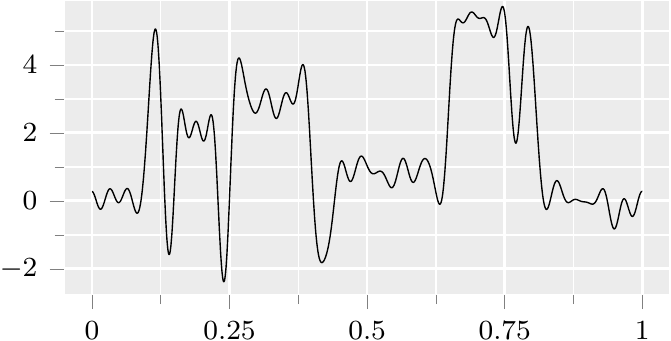}
        }
    \end{minipage}
\caption{\small Top row: original Bumps and Blocks signal; 2nd row: corresponding blurred and noisy signals, $\nu=0.5$, $\alpha=0.5$ (black line: first channel; grey line: second channel) ; 3rd row: reconstructed signal using the proposed method with $M = 2$ channels; 4th row: reconstructed signal using the standard R-package {\tt WaveD} using the best channel (see \eqref{eq:ell.star.estimator}, for the notion of `best channel')}
    \label{fig:2}
\end{figure}

\subsection{Related works}

The case where $M=1$ and $H_1=1/2$ in \eqref{eq:multchan-lm}--\eqref{eq:conv2}
refers to the so-called {\em standard deconvolution model} which attracted attention of a number of researchers.
(Note that the standard deconvolution model is typically {\em ill-posed} in the sense of Hadamard: the inversion does not depend continuously on the observed data, i.e., small noise in the convolved signal leads to a significant error in the estimation procedure.) After a rather rapid progress in this problem in late eighties--early nineties, authors turned to adaptive wavelet solutions of the problem that are optimal (in the {\em minimax} or the {\em maxiset} sense), or near-optimal within a logarithmic factor, in a wide range of Besov balls and for a variety of loss functions defining the risk, and under mild conditions on the blurring function (see, e.g., \cite{Donoho-1995,Abramovich-Silverman-1998,Kalifa-Mallat-2003,Johnstone-et-al-2004,Donoho-Raimondo-2004, Johnstone-Raimondo-2004, Neelamani-et-al-2004,Kerkyacharian-et-al-2007}).

The case $M=1$ and $H_\ell>1/2$ (i.e., standard deconvolution with LRD errors) has been investigated
in \cite{Wang-1996,Wang-1997,Kulik-Raimondo-2009} and \cite{Wishart-2013}.

The case where $\alpha_\ell=1$ for each $\ell=1,2,\ldots,M$; (i.e., the case where in the multichannel deconvolution model \eqref{eq:multchan-lm} the errors follow independent standard Brownian motions) was first considered in \cite{DeCanditiis-Pensky-2006} (extending the results obtained in \cite{Johnstone-et-al-2004} for the case $M=1$).

The case of the multichannel deconvolution with errors following LRD sequences was investigated in \cite{benhaddou:etal} using the minimax approach, extending results obtained in \cite{Pensky-Sapatinas-2009,Pensky-Sapatinas-2010} and \cite{Pensky-Sapatinas-2011}.

The case of nonparametric density estimation for the errors-in-variables problem with LRD has been studied by \cite{Kulik-2008}. In particular, it was shown that LRD has no impact on the optimal convergence properties in the super-smooth scenario. We show similar results for the multichannel deconvolution model presented here.

Finally, for more information regarding the LIDAR device, the reader is referred to, e.g., \cite{Park-et-al-1997} and \cite{Harsdorf-Reuter-2000}.

\subsection{Structure of the paper}
The paper is organised as follows. \autoref{sec:2} contains some preliminaries on the periodised Meyer wavelets and Besov spaces on the unit interval $T$. \autoref{sec:3} provides the construction of the proposed adaptive wavelet thresholding estimators while \autoref{sec:4} contains the corresponding upper bound results over a wide range of Besov spaces and for a variety of loss functions defining the risk, for regular-smooth, super-smooth and box-car convolutions. An extensive simulation study to supplement the theoretical findings of \autoref{sec:4} is performed in \autoref{sec:5}. Conclusions and discussion are given in \autoref{sec:6} and the proofs of the theoretical results and auxiliary results given in \autoref{sec:7} and \hyperref[sec:appendix]{Appendix A}.

\section{Preliminaries}
\label{sec:2}

\subsection{Periodised Meyer wavelets and Besov spaces on the unit interval}
\label{sec:besov}

To avoid edge problems and unnecessary technicalities arising in defining wavelet basis on the unit interval $T$, we will assume that $f(\cdot)$ and $g_\ell(\cdot)$, $\ell=1,2,\ldots,M$; are  periodic on $T$. Moreover, not only for theoretical reasons but also for practical convenience (see, e.g., \cite{Johnstone-et-al-2004}, Sections 2.3, 3.1--3.2), we use band-limited wavelet basis, and in particular the periodised Meyer wavelet basis for which fast algorithms exist (see, e.g., \cite{Kolaczyk-1994} and \cite{Donoho-Raimondo-2004}). Specifically, let $\phi(\cdot)$ and $\psi(\cdot)$ be the Meyer scaling and mother wavelet functions, respectively, on the real line $\mathbb{R}=(-\infty,\infty)$ (see, e.g., \cite{Meyer-1992} or \cite{Mallat-1999}). As usual,
\[
    \phi_{j,k}(t) = 2^{j/2}\phi(2^jt-k), \quad  \psi_{j,k}(t) =
    2^{j/2}\psi(2^jt-k), \quad j\geq 0,\;\;k \in \mathbb{Z}, \quad t \in \mathbb{R},
\]
are, respectively, the dilated and translated Meyer scaling and wavelet functions at resolution level $j$ and scale position $k/2^j$. Similarly to Section 2.3 in \cite{Johnstone-et-al-2004}, we obtain a periodised version of Meyer wavelet basis by periodising the basis functions $\{\phi(\cdot),\psi(\cdot)\}$ on $\mathbb{R}$, i.e., for $j \geq 0$ and $k=0,1,\ldots,2^{j}-1$,
\[
    \Phi_{j,k}(t) = \sum_{i \in \mathbb{Z}} 2^{j/2} \phi(2^j (t +i) -  k),
    \quad \Psi_{j,k}(t) = \sum_{i \in \mathbb{Z}} 2^{j/2} \psi(2^j (t +i) -
    k), \quad t \in T.
\]

In the periodic setting, we recall that Besov spaces are characterised by the behaviour of the wavelet coefficients (see, e.g., \cite{Johnstone-et-al-2004}, Section 2.4), i.e.,
\begin{definition}
    \label{def:SBspace} For  $f(\cdot) \in L^{\pi_0}(T)$, $1\leq \pi_0 < \infty$,
    \begin{equation}
        f(\cdot) \in \Besov{\pi_0}{r}{s}(T)\; \Longleftrightarrow
        \sum_{j=0}^{\infty}2^{j(s+1/2-1/\pi_0)r}\bigg[\sum_{k=0}^{
        2^j-1}|b_{j,k}|^{\pi_0}\bigg]^{r/\pi_0}<\infty, \label{eq:fBesov}
    \end{equation}
with the usual modification if $\pi_0=\infty$ and/or $r=\infty$.
\end{definition}
As usual, the wavelet coefficients $b_{j,k}$ are obtained by $b_{j,k}=\int_T f(t)\psi_{j,k}(t)dt$. The parameter $s >0$ can be thought of as related to the number of derivatives of $f(\cdot)$. With different values of  $\pi_0$ ($1 \leq \pi_0 \leq \infty$) and $r$ ($1 \leq r \leq \infty$), the Besov spaces $\Besov{\pi_0}{r}{s}(T)$ capture a variety of smoothness features in a function including spatially inhomogeneous behaviour.

In the sequel, $\kappa$ will denote the multiple index $(j,k)$ and, adopting standard convention, $\Phi(\cdot) = \Psi_{-1}(\cdot)$, where $\Phi(\cdot)$ corresponds to the periodised scaling function associated with the Meyer wavelet basis mentioned above.

\section{Construction of the adaptive wavelet thresholding and linear estimators} \label{sec:3}

The estimation of $f$ is approached differently for the different deconvolution types. Namely, for regular-smooth and box-car convolutions a wavelet non-linear (hard thresholding) estimator is used while for the super-smooth convolutions a wavelet linear (projection) estimator is used.

To simplify the overall problem, the estimation procedure is considered in the Fourier domain to reduce the convolution operator to a product of Fourier coefficients. Denote the Fourier basis functions, $ e_m(t) \coloneqq e^{2\pi i m t}$, $m \in \mathbb{Z}$, with the corresponding inner product operator, $\innerp{f_1}{f_2} = \int  f_1 (x) \overline{f_2}(x) \, dx$ where $\overline{f}$ denotes the complex conjugate of $f$. Let $h=f*g_{\ell}$. Denote the relevant Fourier coefficients,
\begin{align*}
    \Phi_{m j_0 k} & = \innerp{\Phi_{j_0,k}}{e_m}, \quad \Psi_m^{\kappa} = \Psi_{m j k} = \innerp{\Psi_{j,k}}{e_m},
\end{align*}
\begin{align}
    h_{m,\ell} &= \innerp{h_\ell}{e_m}, \quad y_{m,\ell} = \int_\mathbb{R} \overline{e_{m}}(t) dY_\ell(t),\quad z_{m,\ell} = \int_\mathbb{R} \overline{e_{m}}(t) dB_{H_\ell}(t), \label{eq:Fourier-defs}\\
    f_m  & = \innerp{f}{e_m}, \quad g_{m,\ell} = \innerp{g_\ell}{e_m}, \quad \ell = 1,2,\ldots, M.\nonumber
\end{align}
Applying the Fourier transform to \eqref{eq:multchan-lm}, we get the following sequence space model
\begin{align}
    \label{eq:yl} y_{m,\ell}  & = h_{m,\ell} + \frac{\sigma_\ell}{n^{\alpha_\ell/2}}\; z_{m,l},\quad m \in \mathbb{Z},\quad \ell=1,2,\ldots,M;\\
    \label{eq:hml} h_{m,\ell} & = g_{m,\ell}f_m,\quad m \in \mathbb{Z},\quad \ell=1,2,\ldots,M;
\end{align}
where, for each $\ell$, $\sigma_\ell$ are known positive constants and the structure of the Fourier coefficients, $g_{m,\ell}f_m = \widetilde{K_\ell f}(m)$, is given by \eqref{eq:K.smooth.M} and \eqref{eq:K.box.M} for the smooth-type and box-car convolutions respectively. Following a similar procedure to \cite{DeCanditiis-Pensky-2006}, weights $\gamma_{m,\ell} \overline{g_{m,\ell}}$ are multiplied to the $h_{m,\ell}$ coefficients and added together (where $\gamma_{m,\ell}$ are weights to be specified later). Thus \eqref{eq:hml} leads to the following expression for the target function coefficients,
\[
    f_m=\frac{\sum_{\ell=1}^M \gamma_{m,\ell} \overline{g_{m,\ell}}h_{m,\ell}}{\sum_{\ell=1}^M \gamma_{m,\ell}|g_{m,\ell}|^2}, \quad m \in \mathbb{Z}.
\]
Furthermore using the Parseval identity one can obtain the wavelet coefficients,
\[
    b_{\kappa}=\int_T f(t)\Psi_{\kappa}(t)\,dt=\sum_{m \in \mathbb{Z}}f_{m}\overline{\Psi_{m}^{\kappa}}=\sum_{m \in \mathbb{Z}}\frac{\sum_{\ell=1}^M \gamma_{m,\ell}\overline{g_{m,\ell}}h_{m,\ell}}{\sum_{\ell=1}^M \gamma_{m,\ell} |g_{m,\ell}|^2}\overline{\Psi_{m}^{\kappa}}
\]
which can be estimated using \eqref{eq:yl} with
\begin{equation}
    \label{eq:wav.coeff.estim} \widehat b_{\kappa}=\sum_{m \in C_j}\frac{\sum_{\ell=1}^M \gamma_{m,\ell}\overline{g_{m,\ell}}y_{m,\ell}}{\sum_{\ell=1}^M \gamma_{m,\ell} |g_{m,\ell}|^2}\overline{\Psi_{m}^{\kappa}},
\end{equation}
where $C_j$ denotes the domain of the Meyer wavelet in the Fourier domain,
\begin{equation}
   C_j = \left\{ a \in \mathbb{Z} : \pm a \in \left\{ \left\lceil \frac{2^j}{3} \right\rceil, \left\lceil \frac{2^j}{3} \right\rceil + 1, \ldots, \left\lfloor \frac{2^{j+2}}{3} \right\rfloor \right\} \right\},\label{eq:Cj}
\end{equation} 
where $j \geq 0$. The scaling coefficients $a_{\kappa} = \int_T f(t)\Phi_\kappa(t)\, dt$ and their estimates $\widehat a_{\kappa}$ are defined in a similar manner.

\noindent{\sl Estimators:} A non-linear estimator $\widehat f_n(\cdot)$ of $f(\cdot)$ based on hard thresholding of a wavelet expansion is as follows:
\begin{equation}
    \label{eq:generic} \widehat f_n(t)= \sum_{k = 0}^{2^{j_0}-1}\widehat a_{j_0,k} \Phi_{j_0,k}(t) + \sum_{\kappa \in \Lambda}
    \,\widehat b_{\kappa}\,
    \1{\{|\widehat b_{\kappa}|\geq \lambda\}}\Psi_{\kappa}(t), \quad t \in T,
\end{equation}
where $\1{A}$ denotes the indicator function of the set $A$, the index range, $\Lambda=\Lambda_n$, the coarse scale level $j_0$ and the threshold parameter $\lambda=\lambda_j$ are forthcoming.

A linear (projection) wavelet estimator $\widehat f_n(\cdot)$ of $f(\cdot)$ with coarse scale level $j_0$ is
\begin{equation}
  \label{eq:super-smooth.estimator}\widehat f_n(t) = \sum_{k = 0}^{2^{j_0}-1} \widehat a_{j_0,k} \Phi_{j_0,k}(t).
\end{equation} 

\noindent {\sl Resolution levels:} The range of resolution levels (frequencies) is given by
\[
    \Lambda_n=\{(j,k),\;j_0 \le j\le j_1, \;\; 0 \le k \leq 2^j-1\}.
\]
The coarse scale $j_0$ is defined in the super-smooth case as,
\begin{equation}
  \label{eq:j0.super-smooth} 2^{j_0}\asymp \left( \frac{(\alpha_{\ell_*}-\epsilon)\log n}{2\theta_{\ell_*}}\right)^{1/\beta_{\ell_*}}
\end{equation}
where $\epsilon > 0$ is small, $\theta_\ell$ is the super-smooth parameter defined in \eqref{eq:K.smooth.M} and $\ell_*$ is given by \eqref{eq:optimal.smooth.channel}. For the regular-smooth and box-car case the parameter $j_0$ is not important for the asymptotic convergence of the estimator and we set $j_0 = -1$. The fine scale level $j_1$ is important for the asymptotic convergence results in these cases and is set to be,\begin{equation}
    \label{eq:j1} 2^{j_1}\asymp \left(\frac{n^{\alpha_{\ell_*}}}{\log n}\right)^{1/(2\nu_*+1)}
\end{equation}
for regular-smooth convolutions and
\begin{equation}
    \label{eq:j1.boxcar} 2^{j_1}\asymp \left(\frac{n^{\alpha_*}}{\log n}\right)^{1/(2\widetilde \nu_* + 1)}
\end{equation}
for box-car convolutions, where $\alpha_*$, $\nu_*$, $\widetilde \nu_*$ and $\ell_*$ are defined in \eqref{eq:box.car.alpha}, \eqref{eq:smooth.optimal.nu}, \eqref{eq:box.car.optimal.nu} and \eqref{eq:optimal.smooth.channel} respectively. The fine resolution level $j_1$ in \eqref{eq:j1} coincides with the level given by \cite{Wishart-2013} for the case when $M = 1$, $\nu_1 = \nu$ and $\alpha_1 = \alpha$. 

\medskip

\noindent{\sl Thresholds:} To ease the presentation and include both the regular-smooth and box-car cases, define
\[
   \xi = \begin{cases}
     \alpha_{\ell_*}, & \text{ in the case of regular-smooth deconvolutions};\\
     \alpha_{*}, & \text{ in the case of box-car deconvolutions}.
  \end{cases}
\]
Then the scale level threshold values $\lambda=\lambda_j$ are given by
\begin{equation}
    \label{eq:lambda} \lambda_j=  \zeta\,\tau_{j}\,c_n,
\end{equation}
where the three input parameters are specified as:
\begin{itemize}
    \item $\zeta$: a smoothing parameter, $\zeta>2\sqrt{(p\vee 2)2  \xi}.$
    \item $c_n$: a sample size-dependent scaling factor,
    \begin{equation}
        \label{eq:cn} c_n = \sqrt{\frac{\log n}{n^{\xi}} }.
    \end{equation}
    \item $\tau_j$: a level-dependent scaling factor,
    \begin{align}
        \label{eq:sigma.j} \tau_j^2 &= n^{\xi}\sum_{m \in C_j}  |\Psi^\kappa_m |^2\left( \sum_{\ell=1}^M \sigma_\ell^{-2}n^{\alpha_\ell} |m|^{2H_{\ell}-1}|g_{m,\ell}|^2\right)^{-1}.
    \end{align}
\end{itemize}
In practical applications, the noise levels $\sigma_\ell$; $\ell=1,2,\ldots,M$; are usually unknown. In this case, estimate each $\sigma_\ell$ by $\widehat{\sigma}_\ell$ and define
    \begin{equation}
        \label{eq:sigma.j-hat} \widehat{\tau}_j^2=n^{\xi}\sum_{m \in C_j}  |\Psi^\kappa_m |^2\left( \sum_{\ell=1}^M \widehat \sigma_\ell^{-2}n^{\alpha_\ell} |m|^{2H_{\ell}-1}|g_{m,\ell}|^2\right)^{-1}.
    \end{equation}
This expression is used in the simulation study conducted in Section \ref{sec:4}.
Note that the above thresholds $\lambda_j$ defined in \eqref{eq:lambda} coincide with the ones defined in \cite{DeCanditiis-Pensky-2006} ($M \geq 2$, $\alpha_*=1$).

\section{Upper bound results of the adaptive wavelet thresholding and linear estimators} 
\label{sec:4}
Consider first the smooth convolutions scenario. In this case, the regular-smooth and super-smooth cases are handled when $\theta_\ell = 0$ or $\theta_\ell > 0$ respectively. The super-smooth case is similar to estimating analytic functions with a slow convergence rate. In this scenario linear estimators obtain the optimal (in the minimax sense) convergence rates and hence a linear (projection) wavelet estimator with an appropriate primary resolution level $j_0$ suffices.
\begin{theorem}\label{thm:main-1}
Consider the model described by \eqref{eq:multchan-lm} with $f\in \Besov{\pi_0}{r}{s}(T)$ with $\pi_0 \geq 1$, $s\geq \frac{1}{\pi_0}$. If $\theta_\ell= 0$ for each $\ell = 1,2\ldots,M;$ (regular-smooth case) then consider
\begin{equation}
 \label{eq:Besov-rho}0 < r \leq r_0=\min\Bigg\{
\frac{p(2\nu_*+1)}{2(\nu_*+s)+1},
\frac{(2\nu_*+1)p-2}{2(\nu_*+s)-2/\pi_0+1}  \Bigg\}
\end{equation}
 and the adaptive wavelet estimator $\widehat f_n$ defined in \eqref{eq:generic} with the index range $\Lambda=\Lambda_n$ defined by \eqref{eq:j1} and threshold value $\lambda=\lambda_j$ defined by \eqref{eq:lambda} for some $\zeta>2\sqrt{(p\vee 2)2\alpha_{\ell_*}}$ with $\tau_j$ and $c_n$ given, respectively, by \eqref{eq:sigma.j} and \eqref{eq:cn}. If $\theta_\ell > 0$ for each $\ell = 1,2,\ldots,M;$(super-smooth case) then consider $r >0$ and the linear projection wavelet estimator defined in \eqref{eq:super-smooth.estimator} with coarse scale level, $j_0$, given by \eqref{eq:j0.super-smooth}. Let $p> 1$ be an arbitrary finite real number. Then, there exists a constant $C>0$ such that for all $n \geq 1$,
\[
\mathbb{E} \|\widehat{f}_n-f\|_p^p \le C \left(\frac{\log n}{n^\delta}\right)^{\varrho},
\]
where in the regular-smooth case $\theta_{\ell_*} = 0$ and $\delta = 1$ with
\begin{align}\label{eq:rates-dense-smooth}
\varrho &={\frac{\alpha_{\ell_*} s p}{2(s+(2\nu_*+1)/2)}},& \hbox{ if }& s\geq \frac{(2\nu_*+1)}{2}\Big(\frac{p}{\pi_0}-1\Big);\\
\label{eq:rates-sparse-smooth}
\varrho &=\frac{\alpha_{\ell_*} p(s-1/\pi_0+1/p)}{2(s-1/\pi_0+(2\nu_*+1)/2)},& \hbox{ if }&\frac{1}{\pi_0} - \nu_{*}- \frac{1}{2} \leq s<
\frac{(2\nu_*+1)}{2}\Big(\frac{p}{\pi_0}-1\Big);
\end{align}
while in the super-smooth case, $\theta_{\ell_*} > 0$ and $\delta = 0$ with,
\begin{align}
\varrho &= -ps^*/\beta_{\ell_*} \qquad \text{where } \beta_{\ell_*} > 0\label{eq:rate-super-smooth}
\end{align}
and $s^* = s + 1/p - 1/\min (p,\pi_0)$ and $\ell_*$ is defined with \eqref{eq:optimal.smooth.channel}.

\end{theorem}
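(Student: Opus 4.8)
The plan is to handle the regular-smooth (nonlinear) and super-smooth (linear) cases separately, since the estimators differ, but both start from the $L^p$ norm equivalence for the periodised Meyer basis: for any $g$ with scaling coefficients $a_{j_0,k}$ and detail coefficients $c_\kappa$, one has $\norm{g}_p^p \asymp 2^{j_0 p(1/2-1/p)}\sum_k |a_{j_0,k}|^p + \sum_{j\ge j_0} 2^{j(p/2-1)}\sum_k |c_\kappa|^p$ in the setting of Definition \ref{def:SBspace}. Applying this to $g=\widehat f_n - f$ reduces $\mathbb{E}\norm{\widehat f_n-f}_p^p$ to controlling the $p$-th moments of the coefficient errors level by level; the single coarse coefficient ($j_0=-1$ in the regular-smooth case) is handled directly.

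For the regular-smooth case I would first record two probabilistic inputs for the estimator \eqref{eq:wav.coeff.estim}. Since $\widehat b_\kappa - b_\kappa$ is a centred Gaussian linear functional of the fBm Fourier coefficients $z_{m,\ell}$, its variance is exactly the quantity in \eqref{eq:sigma.j}, so $\Var(\widehat b_\kappa)=\tau_j^2/n^{\xi}$ and the threshold \eqref{eq:lambda} is the calibration $\lambda_j=\zeta\sqrt{\Var(\widehat b_\kappa)\log n}$. The first (deterministic) lemma is the scale estimate $\tau_j^2\asymp 2^{2j\nu_*}$, where the effective degree of ill-posedness $\nu_*=\nu_{\ell_*}+\alpha_{\ell_*}/2-1/2$ of \eqref{eq:smooth.optimal.nu} emerges once the single channel $\ell_*$ of \eqref{eq:optimal.smooth.channel} is shown to dominate the precision sum $\sum_\ell \sigma_\ell^{-2}n^{\alpha_\ell}|m|^{2H_\ell-1}|g_{m,\ell}|^2$ uniformly for $|m|\asymp 2^j$. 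The second input is a Gaussian deviation bound $\PP(|\widehat b_\kappa-b_\kappa|>\lambda_j/2)\lesssim n^{-\zeta^2/8}$, for which the hypothesis $\zeta>2\sqrt{(p\vee2)2\xi}$ forces the exponent to exceed $(p\vee2)\xi$; this is what lets the rare-event contributions survive being raised to the $p$-th power and summed against the $\asymp 2^{j_1}$ coefficients.

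With these inputs in hand I would split the per-coefficient thresholding risk into the usual four events according to whether $|\widehat b_\kappa|$ and $|b_\kappa|$ exceed fixed multiples of $\lambda_j$. The two \emph{mismatched} events are killed by the deviation bound together with a moment bound $\mathbb{E}|\widehat b_\kappa-b_\kappa|^{2p}\lesssim \Var(\widehat b_\kappa)^{p}$, while the two \emph{matched} events reduce to the deterministic bound $\min(|b_\kappa|^p,\lambda_j^p)$. Summing $2^{j(p/2-1)}\sum_k\min(|b_\kappa|^p,\lambda_j^p)$ over $j_0\le j\le j_1$, adding the truncation bias $\sum_{j>j_1}2^{j(p/2-1)}\sum_k|b_\kappa|^p$ controlled by $f\in\Besov{\pi_0}{r}{s}(T)$, and balancing these against the level where $\lambda_j\asymp 2^{j\nu_*}\sqrt{\log n/n^{\alpha_{\ell_*}}}$ crosses $|b_\kappa|$ produces the two regimes of the theorem: the dense rate \eqref{eq:rates-dense-smooth} for large $s$ and the sparse rate \eqref{eq:rates-sparse-smooth} for small $s$, with boundary $s=\tfrac{1}{2}(2\nu_*+1)(p/\pi_0-1)$. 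The upper restriction on $r$ in \eqref{eq:Besov-rho} is precisely what keeps the geometric sum in $j$ convergent at this boundary without an extra logarithmic factor.

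For the super-smooth case the linear estimator \eqref{eq:super-smooth.estimator} gives the simpler split $\mathbb{E}\norm{\widehat f_n-f}_p^p\lesssim\norm{f-P_{j_0}f}_p^p+\mathbb{E}\norm{\widehat f_n-P_{j_0}f}_p^p$: the bias is bounded by $2^{-j_0ps^*}$ with $s^*=s+1/p-1/\min(p,\pi_0)$ and becomes $(\log n)^{-ps^*/\beta_{\ell_*}}$ on substituting \eqref{eq:j0.super-smooth}, while the variance at scale $j_0$ is negligible because the exponential blow-up $e^{2\theta_{\ell_*}2^{\beta_{\ell_*}j_0}}$ in the inverse precision is, by the choice of $2^{j_0}$ with the small $\epsilon>0$, only $n^{\alpha_{\ell_*}-\epsilon}$ and is beaten by the $n^{\alpha_{\ell_*}}$ in the precision, leaving $n^{-\epsilon}$ up to polylogarithmic factors; hence $\delta=0$ and $\varrho=-ps^*/\beta_{\ell_*}$. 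I expect the main obstacle to be the regular-smooth scale lemma $\tau_j^2\asymp2^{2j\nu_*}$ together with the identification of $\ell_*$: one must show the multi-channel precision sum is governed, up to constants, by a single channel uniformly over $m\in C_j$ and $j_0\le j\le j_1$, and that each $z_{m,\ell}$ contributes variance of order $|m|^{1-2H_\ell}$. Reconciling the different Hurst exponents $H_\ell$ simultaneously, rather than the common value $H_\ell=1/2$ of \cite{DeCanditiis-Pensky-2006}, is what makes both this step and the choice of $\ell_*$ substantially more delicate here.
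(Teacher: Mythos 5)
Your super-smooth argument essentially coincides with the paper's proof (a Petsa--Sapatinas-style bias/variance split, the stochastic term reduced to $n^{-\epsilon p/2}$ up to polylogarithmic factors by the choice \eqref{eq:j0.super-smooth}, the bias giving $(\log n)^{-ps^*/\beta_{\ell_*}}$), so that half is fine. For the regular-smooth case, however, your direct four-event thresholding decomposition rests on a false cornerstone: the claimed equivalence $\norm{g}_p^p \asymp 2^{j_0p(1/2-1/p)}\sum_k|a_{j_0,k}|^p+\sum_{j\ge j_0}2^{j(p/2-1)}\sum_k|c_\kappa|^p$ asserts $L^p(T)=\Besov{p}{p}{0}(T)$, which holds only for $p=2$; one has $\Besov{p}{p}{0}\hookrightarrow L^p$ for $p\le 2$ and only the reverse containment for $p\ge 2$. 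Since the theorem allows arbitrary $p>1$, the direction you need fails precisely when $p>2$, and the standard patch (triangle inequality or H\"older across the $\asymp\log n$ active levels) costs an extra factor $(\log n)^{p-1}$, overshooting the stated bound $C(\log n/n)^{\varrho}$. This is exactly why the paper does not argue directly but invokes the Kerkyacharian--Picard maxiset theorem (\autoref{maxlp}): the price of that route is verifying the Temlyakov property of the heteroscedastic system $\{\tau_j\psi_{j,k}\}$ (Section \ref{sec:tem}) --- entirely absent from your proposal --- together with the membership $f\in l_{q,\infty}(\mu)$ and the tail condition \eqref{eq:maxi2} via the Besov embeddings of Section \ref{sec:imbII}, which is where the restriction \eqref{eq:Besov-rho} on $r$ and the dense/sparse boundary are actually used; your sketch gestures at the balancing but never performs these embedding verifications.

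The second genuine gap is your assertion that $\Var(\widehat b_\kappa)$ ``is exactly the quantity in \eqref{eq:sigma.j}.'' For fBm with $H_\ell>1/2$ the Fourier noise coefficients $z_{m,\ell}$ are correlated across frequencies within each channel (see \eqref{eq:exactcov}), so the variance of $\widehat b_\kappa$ a priori contains off-diagonal terms $m\neq m'$. The entire Appendix of the paper is devoted to killing these: \autoref{lem:dyadic} reduces the cross-covariance to dyadic constraints $(m-m')/2^{j'}\in\mathbb{Z}$, and \autoref{lem:identity-covariance} shows, via the Meyer polynomial identity $\nu(x)+\nu(1-x)=1$ and a delicate block-matrix analysis over $C_{j\pm1}^{\sin},C_{j\pm1}^{\cos}$, that the resulting matrix is the identity. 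Only then does the Cauchy--Schwarz step with optimal weights $\gamma^*_{m,\ell}=n^{\alpha_\ell}\sigma_\ell^{-2}|m|^{2H_\ell-1}$ produce \eqref{eq:varBetaK-simple}, hence \eqref{eq:sigma.j} and the scale estimate $\tau_j^2=\mathcal{O}(2^{2j\nu_*})$ on which the calibration $\lambda_j=\zeta\tau_jc_n$ and the identification of $\ell_*$ and $\nu_*$ all rest. You correctly flag this as ``the main obstacle'' but supply no argument for it; since every subsequent step of your regular-smooth analysis presupposes it, this is a missing ingredient and not a routine lemma.
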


Now, consider box-car convolutions scenario. Recall $\alpha_*$ defined by \eqref{eq:box.car.alpha} and $\nu_*$ is now replaced with $\widetilde \nu_*$ defined by \eqref{eq:box.car.optimal.nu}. For the definitions of the `Badly Approximable' (BA) irrational number and the BA irrational tuple that we used in the following statement, see, e.g., p.22 and p.42 of \cite{Schmidt-1980}.

\begin{theorem}\label{thm:main-2}
Consider the model described by \eqref{eq:multchan-lm} and the wavelet estimator $\widehat{f}_n$ defined in \eqref{eq:generic} with the index range $\Lambda=\Lambda_n$ defined by \eqref{eq:j1.boxcar} and threshold value $\lambda=\lambda_j$ defined by \eqref{eq:lambda} for some
$\zeta>2\sqrt{(p\vee 2)2\alpha_*}$ with $\tau_j$ and $c_n$ given, respectively, by \eqref{eq:sigma.j} and \eqref{eq:cn}. Let $p> 1$ be an arbitrary finite real number and assume that one of the $c_1,c_2,\ldots,c_M$ is a BA irrational number and that $c_1,c_2,\ldots,c_M$ ($M \geq 2$) is a BA irrational tuple. If $f\in \Besov{\pi_0}{r}{s}(T)$ with $\pi_0 \geq 1$, $s\geq \frac{1}{\pi_0} - \widetilde \nu_* - 1/2$ and $r$ satisfying \eqref{eq:Besov-rho} with $\nu_*$ replaced with $\widetilde \nu_*$, then, in this case, the result of \autoref{thm:main-1} still holds with $\delta = 1$ and
\begin{align}\label{eq:rates-dense-boxcar}
\varrho&={\frac{\alpha_* s p}{2(s+(2\widetilde \nu_*+1)/2)}},&  \hbox{ if }&
  s\geq \frac{(2\widetilde \nu_*+1)}{2}\Big(\frac{p}{\pi_0}-1\Big);\\
\label{eq:rates-sparse-boxcar}
\varrho &=\frac{\alpha_* p(s-1/\pi_0+1/p)}{2(s-1/\pi_0+(2\widetilde \nu_*+1)/2)},&
 \hbox{ if }& \frac{1}{\pi_0} - \widetilde \nu_{*}- \frac{1}{2}\leq s<
\frac{(2\widetilde \nu_*+1)}{2}\Big(\frac{p}{\pi_0}-1\Big);
\end{align}
where $\widetilde \nu_*$ is defined by \eqref{eq:box.car.optimal.nu} and $\alpha_*$ is defined by \eqref{eq:box.car.alpha}.
\end{theorem}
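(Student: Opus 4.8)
The plan is to run the same hard-thresholding machinery used for the regular-smooth branch of \autoref{thm:main-1} (both are polynomial, $\delta=1$ regimes), treating the multichannel box-car operator \eqref{eq:K.box.M} as an \emph{effective} regular-smooth operator whose degree of ill-posedness is the quantity $\widetilde\nu_*$ of \eqref{eq:box.car.optimal.nu}; the whole point of the proof is to justify that identification. Concretely, I would first expand the loss in the periodised Meyer basis and, using the $L^p$ norm equivalence $\norm{g}_p^p\asymp\sum_j 2^{j(p/2-1)}\sum_k|c_{j,k}|^p$ valid for band-limited wavelets, reduce $\mathbb{E}\norm{\widehat f_n-f}_p^p$ to a sum over resolution levels $j_0\le j\le j_1$ of aggregated coefficient errors, with $j_0=-1$ (so the scaling part is negligible) and $j_1$ from \eqref{eq:j1.boxcar}. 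On each level the hard-thresholding error $\widehat b_\kappa\1{\{|\widehat b_\kappa|\ge\lambda\}}-b_\kappa$ is split into the four canonical regimes (large/small true coefficient $\times$ detected/missed): the two deterministic regimes are then controlled by the Besov membership of $f$ through \autoref{def:SBspace} and \eqref{eq:fBesov}, under the secondary-index restriction \eqref{eq:Besov-rho} with $\nu_*$ replaced by $\widetilde\nu_*$, while the two stochastic regimes are controlled by tail and moment bounds on $\widehat b_\kappa-b_\kappa$.

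The probabilistic input is essentially unchanged from \autoref{thm:main-1}. Since $\widehat b_\kappa$ in \eqref{eq:wav.coeff.estim} is a fixed linear functional of the Gaussian fBm noise, $\widehat b_\kappa-b_\kappa$ is centred Gaussian with variance $\Var(\widehat b_\kappa)=\tau_j^2/n^{\xi}$, where $\tau_j$ is given by \eqref{eq:sigma.j} and $\xi=\alpha_*$; consequently the threshold $\lambda_j$ of \eqref{eq:lambda} is exactly $\zeta\sqrt{\log n}$ times the standard deviation of $\widehat b_\kappa$. Gaussian tail bounds together with the choice $\zeta>2\sqrt{(p\vee 2)2\alpha_*}$ then force $\PP(|\widehat b_\kappa-b_\kappa|>\lambda_j/2)$ to decay faster than any prescribed power of $n$, uniformly over the $\asymp 2^{j}$ indices $k$ and summably over $j\le j_1$; combined with the Gaussian moment bounds this disposes of the false-detection and missed-detection regimes and leaves a clean bias--variance trade-off to be balanced at $j_1$.

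The genuinely new and hardest step is to show that, notwithstanding the zeros of the sinc factors in \eqref{eq:K.box.M}, the level scale $\tau_j^2$ still grows like $2^{2\widetilde\nu_* j}$, i.e. that the multichannel box-car behaves like a regular-smooth operator of index $\widetilde\nu_*$. Writing $|g_{m,\ell}|^2\asymp|m|^{-2}\sin^2(\pi m c_\ell)$, and recalling $|m|\asymp 2^j$ on $C_j$ by \eqref{eq:Cj}, this reduces to upper bounding the block sums
\[
\sum_{m\in C_j}\Big(\sum_{\ell=1}^M \sigma_\ell^{-2}\, n^{\alpha_\ell}\,|m|^{2H_\ell-3}\sin^2(\pi m c_\ell)\Big)^{-1}.
\]
The danger is frequencies $m$ at which $m c_\ell$ lies close to an integer for \emph{every} channel simultaneously, which would inflate the summand. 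Here the single badly-approximable hypothesis yields $\sin^2(\pi m c_\ell)\gtrsim|m|^{-2}$ for the distinguished channel, while the badly-approximable \emph{tuple} hypothesis guarantees that the $M$ phases $m c_1,\dots,m c_M$ cannot be jointly near-integer, so the inner sum is bounded below along every dyadic block. The quantitative estimate I need is of the Diophantine type $\sum_{m\in C_j}\sin^{-2}(\pi m c)\asymp 2^{2j}$ for a badly-approximable $c$, together with its $M$-channel refinement producing the factor $(2M+1)/(2M)$, in the spirit of the Brownian box-car analyses of \cite{Johnstone-Raimondo-2004} and \cite{DeCanditiis-Pensky-2006}, but now carrying the long-memory weights $|m|^{2H_\ell-1}$ and heterogeneous $\alpha_\ell$. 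Tracking the two scalings separately --- the $n$-power being governed by the largest exponent $\alpha^*$ of \eqref{eq:box.car.alpha} while the threshold normalisation uses $\xi=\alpha_*$ --- is precisely what produces the asymmetric appearance of $\alpha_*$ in the numerator of $\varrho$ and of $\alpha^*$ inside $\widetilde\nu_*$. I expect this number-theoretic block estimate, and its uniformity in $j$, to be the main obstacle.

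With $\tau_j^2\asymp 2^{2\widetilde\nu_* j}$ established, the remainder is bookkeeping identical to the smooth case: the detected/stochastic regimes assemble into a variance series whose dominant term sits at $j_1$, the missed/deterministic regimes assemble into the Besov bias series governed by \eqref{eq:fBesov}, and the choice of $j_1$ in \eqref{eq:j1.boxcar} balances the two. The constraint \eqref{eq:Besov-rho} (with $\widetilde\nu_*$) ensures convergence of the auxiliary series and, according to whether $s\ge\frac{(2\widetilde\nu_*+1)}{2}\big(\tfrac{p}{\pi_0}-1\big)$ or not, selects the dense rate \eqref{eq:rates-dense-boxcar} or the sparse rate \eqref{eq:rates-sparse-boxcar}, giving the stated bound with $\delta=1$. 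As a consistency check, specialising to $\alpha^*=\alpha_*=1$ and $\widetilde\nu_*=(2M+1)/(2M)$ recovers the Brownian box-car rates of \cite{DeCanditiis-Pensky-2006}.
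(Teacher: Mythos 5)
Your overall architecture is a legitimate alternative to the paper's: the paper does not run the classical four-regime (large/small $\times$ detected/missed) decomposition directly, but instead invokes the maxiset theorem of Kerkyacharian--Picard (\autoref{maxlp}), verifying its hypotheses \eqref{eq:condlam}, \eqref{eq:cond.1}, \eqref{eq:cond.2}, the Temlyakov property for the heteroscedastic basis $\{\tau_j\psi_{j,k}\}$, and then locating $\Besov{\pi_0}{r}{s}$ inside $l_{q,\infty}(\mu)$ via the Besov embeddings \eqref{eq:Inc1}--\eqref{eq:Inc2} with the dense/sparse choices \eqref{eq:q.dense}--\eqref{eq:q.sparse}. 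Your direct route can be made to work, but two of your statements need repair. First, the claimed ``norm equivalence'' $\norm{g}_p^p\asymp\sum_j 2^{j(p/2-1)}\sum_k|c_{j,k}|^p$ is false for $p\neq 2$ ($L^p$ is not $\Besov{p}{p}{0}$); for an upper bound you only need the one-sided inequality obtained from the level-wise $L^p$ equivalence plus the triangle inequality, so this is fixable but not as stated. Second, your mechanism for the $\alpha_*$/$\alpha^*$ asymmetry is backwards: in the variance bound the $n$-power is $n^{-\alpha_*}$ (one pulls the \emph{minimum} exponent out of $\sum_\ell n^{\alpha_\ell}(\cdot)$), while $\alpha^*$ enters through bounding $|m|^{2H_\ell-1}=|m|^{1-\alpha_\ell}\geq|m|^{1-\alpha^*}$ on $C_j$; your final rate statement is nevertheless correct. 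Also, the correct level scale is $\tau_j^2=\mathcal{O}\big(j\,2^{2\widetilde\nu_*j}\big)$, not $\asymp 2^{2\widetilde\nu_*j}$: the Diophantine block estimate (which the paper imports from Lemma 4 of \cite{DeCanditiis-Pensky-2006} rather than reproving, namely $\sum_{m\in C_j}m^2\big(\sum_{\ell=1}^M\norm{mc_\ell}^2\big)^{-1}=\mathcal{O}\big(j2^{j(3+1/M)}\big)$) carries an extra factor $j$, harmless only because the final rates are stated within logarithmic factors.

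The genuine gap is your treatment of the variance. You assert that $\widehat b_\kappa-b_\kappa$ is centred Gaussian with variance exactly $\tau_j^2/n^{\xi}$ ``since $\widehat b_\kappa$ is a fixed linear functional of the Gaussian fBm noise.'' Gaussianity is immediate, but the variance identity is not: under fBm with $H_\ell\neq 1/2$ the Fourier noise coefficients are correlated \emph{across frequencies} --- by \eqref{eq:exactcov}--\eqref{eq:covZHsum}, $\Cov{z_{m,\ell}}{z_{m',\ell}}\neq 0$ whenever $(m-m')2^{-j'}\in\mathbb{Z}$ for some $j'$ --- so $\Var(\widehat b_\kappa)$ is a priori a double sum over $m,m'\in C_j$ as in \eqref{eq:varBetaK}. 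Identifying it with the single diagonal sum \eqref{eq:sigma.j} requires showing the off-diagonal contributions cancel, and the paper devotes \autoref{lem:dyadic} and \autoref{lem:identity-covariance} (the Appendix's main content) to proving that the Meyer-specific matrix $M_{m,m'}=\sum_{j'\in\mathbb{Z}}\1{\{(m-m')2^{-j'}\in\mathbb{Z}\}}\psi_{m2^{-j'}}\overline{\psi_{m'2^{-j'}}}$ is the identity, exploiting the defining polynomial property $\nu(x)+\nu(1-x)=1$ of the Meyer wavelet. Moreover, even after diagonalisation the variance equals \eqref{eq:sigma.j}$/n^\xi$ only for the Cauchy--Schwarz-optimal weights $\gamma^*_{m,\ell}=n^{\alpha_\ell}\sigma_\ell^{-2}|m|^{2H_\ell-1}$, a choice your proposal never makes. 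Without these two steps your threshold calibration (``$\lambda_j$ is $\zeta\sqrt{\log n}$ standard deviations''), the Gaussian tail and moment bounds, and even the Diophantine reduction itself (which starts from the diagonal form \eqref{eq:varBetaK-simple}) are all unsupported; this is precisely the long-range-dependence-specific obstruction that distinguishes the present problem from the Brownian case you use as a consistency check.
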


\begin{remark}There is an {\em elbow effect} or {\em phase transition} in the upper bound on the $L^p$-risk $(1 \leq p < \infty)$ in both the regular-smooth and box-car convolutions. Namely, in the regular-smooth case switching from \eqref{eq:rates-dense-smooth} to \eqref{eq:rates-sparse-smooth} as the assumed smoothness decreases; and similarly switching from \eqref{eq:rates-dense-boxcar} to \eqref{eq:rates-sparse-boxcar} in the box-car case. The two regimes are usually referred to as the {\em dense} and {\em sparse} regions respectively (see \cite{Johnstone-et-al-2004} and \cite{DeCanditiis-Pensky-2006} for the case with independent Brownian motion errors). The upper bound results obtained in \autoref{thm:main-1} for the regular-smooth case and in \autoref{thm:main-2} for the box-car case show that the boundary region of $s$ depends on the LRD indices $\alpha_\ell$, $\ell=1,2,\ldots,M$; and the sparse region is smaller in the case where the errors follow independent fBms.
\end{remark}

\begin{remark}\label{rem:agree}Single Channel, $M = 1$: 
For $\nu_*=\nu_1=0$, the upper bounds obtained on the $L^p$-risk $(1 \le p < \infty)$ in Theorem 1 agree with existing optimal rate results (up to a logarithmic factor) for wavelet regression with long-memory errors obtained by \cite{Wang-1996}, (minimax $L^2$-risk) and \cite{Kulik-Raimondo-2009} (upper bounds on the $L^p$-risk, $1 \le p < \infty$). Similarly, when $\nu_* = \nu_1 > 0$ the results also agree with \cite{Wang-1997} (minimax $L^2$-risk, $p = 2)$ and \cite{Wishart-2013} (upper bounds on the $L^p$-risk, $1 \le p < \infty$). Multichannel, $M > 1$: Our results generalise the results in \cite{DeCanditiis-Pensky-2006} and include the results of their case when $\alpha_* = 1$ (upper bounds on the $L^p$-risk, $1 \le p < \infty$).
\end{remark}

\begin{remark}As expected, the upper bounds deteriorate in the regular-smooth and box-car cases when $\nu_{\ell_*}$ increases (larger DIP) or when $\alpha_{\ell_*}$ decreases (stronger LRD). The combined effect of $\nu_{\ell_*}$ and $\alpha_{\ell_*}$ on the location of the elbow is reverse as the sparse region increases with both $\nu_{\ell_*}$ and $\alpha_{\ell_*}$. Consistent with the literature, the super-smooth case has a logarithmic convergence rate with indices that depend on the underlying smoothness in $s^*$ and the severity of the super-smooth decay in $\beta_{\ell_*}$. The upper bounds on the $L^p$-risk $(1 \le p < \infty)$ in the super-smooth case do not depend on  $\nu_{\ell_*}$ or $\alpha_{\ell_*}$.
\end{remark}

\begin{remark}Our upper bounds on the $L^p$-risk (for $p=2$) are not directly comparable to the maximal upper bounds obtained in \cite{benhaddou:etal}. In that paper the framework is different whereby the number of channels $M$ depends on the number of total observations, $n$, in each channel (i.e. $M=M_n$). However, in our case the number of channels is fixed and not dependent on $n$. Our results are comparable to the works of \cite{DeCanditiis-Pensky-2006,Wishart-2013} demonstrating both the effects of the number of channels and the LRD on the upper bounds on the $L^p$-risk $(1 \le p < \infty)$.
\end{remark}

\section{Simulation study}
\label{sec:5}

A simulation study for $p = 2$ is conducted for the regular-smooth scenario and is heavily based on the algorithm in the  {\tt WaveD} {\tt R}-package  of \cite{Raimondo-Stewart-2007}. In the regular smooth scenario, it is crucial to know $\ell_*$, the `best channel', since it appears in both the smoothing parameter $\zeta$ and in the fine scale level $j_1$. The fine scale parameter is particularly important since it truncates the wavelet expansion early enough to ensure an accurate yet reliable algorithm. Methods have been established for choosing $j_1$ in practice for the single channel regular Brownian motion case by \cite{Cavalier-Raimondo-2007} and expanded to the single channel LRD case by \cite{Wishart-2013}. The method is sketched below and the interested reader is referred to those papers for a more in-depth treatment.

 The method assumes the practitioner can pass the Fourier basis, $f = \left\{ e_u \right\}_{u \in \mathbb{Z}}$, into \eqref{eq:multchan-lm} and denote this new information with,
\[
  d\breve Y_\ell (x) =g_\ell * e_u (x) + \sigma_\ell n^{-\alpha_\ell/2} dB_{H_\ell}(x).
\]
Due to the orthogonality of the Fourier basis, the Fourier domain representation of $\breve Y_\ell$ is
\[
  \breve y_{m,\ell} = \int_\mathbb{R} e_m(x) d\breve Y_\ell (x)  = g_{m, \ell} + \sigma_\ell n^{-\alpha_\ell/2} w_{m,\ell}
\]
where $w_{m,\ell}$ is identically distributed to but independent of $z_{m,\ell}$ (recall \eqref{eq:Fourier-defs} for the definition of $z_{m,\ell}$). Then an estimate of $j_{\ell,1} \asymp \left( n^{\alpha_\ell}/\log n \right)^{1/(\alpha_\ell + 2\nu_\ell)}$ is constructed with,
\[ 
	\widehat j_{\ell,1} = \lfloor \log_2 F_\ell \rfloor -1 
\]
where the stopping time $F_\ell$ is determined in the Fourier domain with $F_\ell = \min\left\{ \omega, \omega > 0 : |\breve y_{\omega,\ell}| \le \omega^{\alpha/2} \varepsilon_\ell \log \varepsilon_\ell^{-2} \right\}$ and $\varepsilon_\ell \coloneqq  \sigma_\ell n^{-\alpha_\ell/2}$. The estimate $\widehat j_{\ell,1}$ is close to $j_{\ell,1}$ with high probability due to Lemma 1 in \cite{Wishart-2013}. 

Then define the overall fine scale estimator with,
\begin{equation}
	\widehat j_1 = \max_{1 \le \ell \le M} \widehat j_{\ell,1}.\label{eq:j1.estimator}
\end{equation}
since the optimal channel defined with
\[
	\ell^* \coloneqq \argmax _{1 \le \ell \le M} \left\{  \left( \frac{n^{\alpha_\ell}}{\log n}\right)^{1/(\alpha_\ell + 2\nu_\ell)} \right\},
\]
is equivalent to the optimal channel $\ell_*$ defined in \eqref{eq:optimal.smooth.channel}. For the same reason, the best channel is estimated as the one with the largest stopping time,
\begin{equation}
  \widehat \ell_* = \argmax_{1 \le \ell \le M } F_\ell \label{eq:ell.star.estimator}
\end{equation}

The theory suggests that the smoothing parameter should satisfy the bound, $\zeta > 4 \sqrt{\alpha_{\ell_*}}$ when $p = 2$. However, as will become evident in the simulations a smaller choice for $\zeta$ results in improved numerical performance. This smaller choice of smoothing parameter compared to the theory is consistent with other numerical results of \cite{Johnstone-et-al-2004} and \cite{Wishart-2013}. The signals used in the simulations are the standard {\tt LIDAR, Doppler, Bumps} and {\tt Blocks} functions that have been used consistently throughout the literature (cf. \cite{Donoho-et-al-1995,Cavalier-Raimondo-2007})

The steps for a simulation study are then as follows:
\begin{enumerate}
\item We choose $f(\cdot)$ to be the {\tt Doppler}, {\tt LIDAR}, {\tt Bumps} or {\tt Blocks} functions.
\item Choose $M$, $n$, $\nu_{\ell}$ and the set of dependence parameters $\alpha_\ell$ for each $\l = 1,2.\ldots, M$; and $n = 2^J$ for $J =12$.
\item Generate $M$ independent FARIMA sequences of length $n$. Each sequence is standardised, to have the same signal-to-noise ratio,
\[
	\text{SNR} = 10 \log_{10} \left( \norm{g_\ell * f}^2/\sigma_\ell^2 \right)
\]
	for three scenarios where $\text{SNR}$ = 10 dB (high noise), 20 dB (medium noise) or 30 dB (low noise). This means that the level of noise compared to the blurred signal is standardised. To simulate the dependent sequence, we use the {\tt R}-package {\tt fracdiff} and the {\tt R}-function {\tt fracdiff.sim}.
\item Estimate the highest permissible scale level, $\widehat j_1$  by using the estimator $\widehat j_1$ defined in \eqref{eq:j1.estimator}.
\item Estimate the `best channel' from the noisy data using \eqref{eq:ell.star.estimator} with $\sigma_\ell$ replaced with $\widehat \sigma_\ell$. Then set the smoothing parameter $\zeta$.
\item Compute $\widehat b_{\kappa}$ using the formula (\ref{eq:wav.coeff.estim}) with level-depending
thresholds $\lambda_j=\zeta \widehat{\tau}_j c_n$ defined in
(\ref{eq:lambda}), where $\widehat{\tau}_j$ and $c_n$ are given
(\ref{eq:sigma.j-hat}) and (\ref{eq:cn}), respectively. The noise level in each channel is estimated using the MAD of the wavelet coefficients at the highest scale level ($J-1$).
\item Compute the above estimates repeatedly to obtain an empirical version of the RMSE with,
\[
	\widehat{MSE}(\widehat f,f) = \widehat{\mathbb{E}} \norm{\widehat f - f}_2 = \frac{1}{m} \sum_{i = 1}^m \norm{\widehat f_i - f}_2
\]
where $m = 1024.$
\end{enumerate}

The results of the simulations are populated in Tables \ref{tab:1} -- \ref{tab:4}

\begin{table}
\centering
\resizebox{\textwidth}{!}{
\begin{tabular}{llllclllclll}
\toprule
\multicolumn{1}{l}{$\nu = 0.3$}&\multicolumn{3}{c}{$\alpha =  1 $}&\multicolumn{1}{c}{}&\multicolumn{3}{c}{$\alpha =  0.8 $}&\multicolumn{1}{c}{}&\multicolumn{3}{c}{$\alpha =  0.6 $}\tabularnewline
\cline{2-4} \cline{6-8} \cline{10-12}
\multicolumn{1}{l}{}&\multicolumn{1}{c}{$M = 1$}&\multicolumn{1}{c}{$M = 2$}&\multicolumn{1}{c}{$M=3$}&\multicolumn{1}{c}{}&\multicolumn{1}{c}{$M = 1$}&\multicolumn{1}{c}{$M = 2$}&\multicolumn{1}{c}{$M=3$}&\multicolumn{1}{c}{}&\multicolumn{1}{c}{$M = 1$}&\multicolumn{1}{c}{$M = 2$}&\multicolumn{1}{c}{$M=3$}\tabularnewline
\midrule
\multicolumn{11}{c}{{\tt LIDAR}: SNR 20 dB}\\ \cmidrule{2-12}
~~$\sqrt{\alpha_{\ell_*}}$&\textbf{0.054}\hfill (7)&\textbf{0.045}\hfill (7)&\textbf{0.041}\hfill (7)&&\textbf{0.064}\hfill (7)&\textbf{0.052}\hfill (7)&\textbf{0.046}\hfill (7)&&\textbf{0.081}\hfill (7)&\textbf{0.064}\hfill (7)&\textbf{0.056}\hfill (7)\tabularnewline
~~$4\sqrt{\alpha_{\ell_*}}$&0.073\hfill (7)&0.06\hfill (7)&0.052\hfill (7)&&0.08\hfill (7)&0.065\hfill (7)&0.057\hfill (7)&&0.093\hfill (7)&0.074\hfill (7)&0.065\hfill (7)\tabularnewline
~~{\small WaveD}&0.06\hfill (7)&0.059\hfill (7)&0.059\hfill (7)&&0.064\hfill (7.6)&0.064\hfill (7.8)&0.064\hfill (7.9)&&0.083\hfill (8)&0.084\hfill (8)&0.084\hfill (8)\tabularnewline
\midrule
\multicolumn{11}{c}{{\tt Doppler}: SNR 20 dB}\\ \cmidrule{2-12}
~~$\sqrt{\alpha_{\ell_*}}$&\textbf{0.039}\hfill (8)&\textbf{0.03}\hfill (8)&\textbf{0.026}\hfill (8)&&0.048\hfill (8)&\textbf{0.036}\hfill (8)&\textbf{0.031}\hfill (8)&&0.062\hfill (8)&\textbf{0.046}\hfill (8)&\textbf{0.039}\hfill (8)\tabularnewline
~~$4\sqrt{\alpha_{\ell_*}}$&0.056\hfill (8)&0.044\hfill (8)&0.036\hfill (8)&&0.059\hfill (8)&0.046\hfill (8)&0.038\hfill (8)&&0.064\hfill (8)&0.05\hfill (8)&0.041\hfill (8)\tabularnewline
~~{\small WaveD}&0.046\hfill (8)&0.045\hfill (8)&0.045\hfill (8)&&\textbf{0.047}\hfill (8)&0.047\hfill (8)&0.047\hfill (8)&&\textbf{0.057}\hfill (8)&0.058\hfill (8)&0.058\hfill (8)\tabularnewline
\midrule
\multicolumn{11}{c}{{\tt Bumps}: SNR 20 dB}\\ \cmidrule{2-12}
~~$\sqrt{\alpha_{\ell_*}}$&\textbf{0.275}\hfill (7)&\textbf{0.27}\hfill (7)&\textbf{0.268}\hfill (7)&&0.28\hfill (7)&0.273\hfill (7)&0.27\hfill (7)&&0.288\hfill (7)&0.278\hfill (7)&0.274\hfill (7)\tabularnewline
~~$4\sqrt{\alpha_{\ell_*}}$&0.279\hfill (7)&0.273\hfill (7)&0.271\hfill (7)&&0.282\hfill (7)&0.276\hfill (7)&0.273\hfill (7)&&0.289\hfill (7)&0.279\hfill (7)&0.276\hfill (7)\tabularnewline
~~{\small WaveD}&0.276\hfill (7)&0.276\hfill (7)&0.275\hfill (7)&&\textbf{0.253}\hfill (7.2)&\textbf{0.231}\hfill (7.4)&\textbf{0.215}\hfill (7.6)&&\textbf{0.189}\hfill (8)&\textbf{0.188}\hfill (8)&\textbf{0.188}\hfill (8)\tabularnewline
\midrule
\multicolumn{11}{c}{{\tt Blocks}: SNR 20 dB}\\ \cmidrule{2-12}
~~$\sqrt{\alpha_{\ell_*}}$&\textbf{0.373}\hfill (6)&\textbf{0.365}\hfill (6)&\textbf{0.363}\hfill (6)&&\textbf{0.384}\hfill (6)&\textbf{0.371}\hfill (6)&\textbf{0.367}\hfill (6)&&0.502\hfill (5)&0.492\hfill (5)&0.489\hfill (5)\tabularnewline
~~$4\sqrt{\alpha_{\ell_*}}$&0.397\hfill (6)&0.373\hfill (6)&0.366\hfill (6)&&0.414\hfill (6)&0.382\hfill (6)&0.372\hfill (6)&&0.508\hfill (5)&0.495\hfill (5)&0.49\hfill (5)\tabularnewline
~~{\small WaveD}&0.376\hfill (6)&0.376\hfill (6)&0.376\hfill (6)&&0.385\hfill (6)&0.385\hfill (6)&0.385\hfill (6)&&\textbf{0.408}\hfill (6)&\textbf{0.408}\hfill (6)&\textbf{0.409}\hfill (6)\tabularnewline
\bottomrule
\end{tabular}}
\caption{RMSE for estimates when $\nu = 0.3$ at mild levels of strong dependence when the number of channels ($M$) increases.}
\label{tab:1}
\end{table}


\begin{table}
\centering
\resizebox{\textwidth}{!}{
\begin{tabular}{llllclllclll}
\toprule
\multicolumn{1}{l}{$\nu = 0.3$}&\multicolumn{3}{c}{$\alpha =  0.5 $}&\multicolumn{1}{c}{}&\multicolumn{3}{c}{$\alpha =  0.3 $}&\multicolumn{1}{c}{}&\multicolumn{3}{c}{$\alpha =  0.1 $}\tabularnewline
\cline{2-4} \cline{6-8} \cline{10-12}
\multicolumn{1}{l}{}&\multicolumn{1}{c}{$M = 1$}&\multicolumn{1}{c}{$M = 2$}&\multicolumn{1}{c}{$M=3$}&\multicolumn{1}{c}{}&\multicolumn{1}{c}{$M = 1$}&\multicolumn{1}{c}{$M = 2$}&\multicolumn{1}{c}{$M=3$}&\multicolumn{1}{c}{}&\multicolumn{1}{c}{$M = 1$}&\multicolumn{1}{c}{$M = 2$}&\multicolumn{1}{c}{$M=3$}\tabularnewline
\midrule
\multicolumn{11}{c}{{\tt LIDAR}: SNR 20 dB}\\ \cmidrule{2-12}
~~$\sqrt{\alpha_{\ell_*}}$&\textbf{0.094}\hfill (7)&\textbf{0.073}\hfill (7)&\textbf{0.063}\hfill (7)&&\textbf{0.115}\hfill (6)&\textbf{0.089}\hfill (6)&\textbf{0.077}\hfill (6)&&0.192\hfill (6)&0.141\hfill (6)&0.118\hfill (6)\tabularnewline
~~$4\sqrt{\alpha_{\ell_*}}$&0.102\hfill (7)&0.081\hfill (7)&0.07\hfill (7)&&0.122\hfill (6)&0.098\hfill (6)&0.084\hfill (6)&&\textbf{0.164}\hfill (6)&\textbf{0.126}\hfill (6)&\textbf{0.107}\hfill (6)\tabularnewline
~~{\small WaveD}&0.103\hfill (8)&0.105\hfill (8)&0.105\hfill (8)&&0.168\hfill (8)&0.169\hfill (8)&0.171\hfill (8)&&0.271\hfill (8)&0.273\hfill (8)&0.273\hfill (8)\tabularnewline
\midrule
\multicolumn{11}{c}{{\tt Doppler}: SNR 20 dB}\\ \cmidrule{2-12}
~~$\sqrt{\alpha_{\ell_*}}$&0.072\hfill (7.7)&0.054\hfill (7.9)&0.045\hfill (8)&&0.091\hfill (7)&0.073\hfill (7)&0.066\hfill (7)&&0.153\hfill (7)&0.113\hfill (7)&0.097\hfill (7)\tabularnewline
~~$4\sqrt{\alpha_{\ell_*}}$&\textbf{0.068}\hfill (7.7)&\textbf{0.053}\hfill (7.9)&\textbf{0.044}\hfill (8)&&\textbf{0.08}\hfill (7)&\textbf{0.065}\hfill (7)&\textbf{0.059}\hfill (7)&&\textbf{0.111}\hfill (7)&\textbf{0.086}\hfill (7)&\textbf{0.076}\hfill (7)\tabularnewline
~~{\small WaveD}&0.069\hfill (8)&0.07\hfill (8)&0.07\hfill (8)&&0.107\hfill (8)&0.109\hfill (8)&0.109\hfill (8)&&0.17\hfill (8)&0.171\hfill (8)&0.172\hfill (8)\tabularnewline
\midrule
\multicolumn{11}{c}{{\tt Bumps}: SNR 20 dB}\\ \cmidrule{2-12}
~~$\sqrt{\alpha_{\ell_*}}$&0.294\hfill (7)&0.281\hfill (7)&0.276\hfill (7)&&0.469\hfill (6)&0.461\hfill (6)&0.458\hfill (6)&&0.496\hfill (6)&0.475\hfill (6)&0.467\hfill (6)\tabularnewline
~~$4\sqrt{\alpha_{\ell_*}}$&0.294\hfill (7)&0.282\hfill (7)&0.278\hfill (7)&&0.467\hfill (6)&0.461\hfill (6)&0.458\hfill (6)&&0.489\hfill (6)&0.472\hfill (6)&0.466\hfill (6)\tabularnewline
~~{\small WaveD}&\textbf{0.201}\hfill (8)&\textbf{0.201}\hfill (8)&\textbf{0.202}\hfill (8)&&\textbf{0.246}\hfill (8)&\textbf{0.247}\hfill (8)&\textbf{0.248}\hfill (8)&&\textbf{0.329}\hfill (8)&\textbf{0.331}\hfill (8)&\textbf{0.33}\hfill (8)\tabularnewline
\midrule
\multicolumn{11}{c}{{\tt Blocks}: SNR 20 dB}\\ \cmidrule{2-12}
~~$\sqrt{\alpha_{\ell_*}}$&0.511\hfill (5)&0.497\hfill (5)&0.492\hfill (5)&&0.82\hfill (4)&0.808\hfill (4)&0.804\hfill (4)&&1.116\hfill (3)&1.094\hfill (3)&1.087\hfill (3)\tabularnewline
~~$4\sqrt{\alpha_{\ell_*}}$&0.518\hfill (5)&0.5\hfill (5)&0.494\hfill (5)&&0.823\hfill (4)&0.809\hfill (4)&0.805\hfill (4)&&1.116\hfill (3)&1.094\hfill (3)&1.087\hfill (3)\tabularnewline
~~{\small WaveD}&\textbf{0.43}\hfill (6)&\textbf{0.43}\hfill (6)&\textbf{0.43}\hfill (6)&&\textbf{0.506}\hfill (6)&\textbf{0.507}\hfill (6)&\textbf{0.507}\hfill (6)&&\textbf{0.68}\hfill (6.1)&\textbf{0.691}\hfill (6.3)&\textbf{0.697}\hfill (6.3)\tabularnewline
\bottomrule
\end{tabular}}
\caption{RMSE for estimates when $\nu = 0.3$ at severe levels of strong dependence when the number of channels ($M$) increases.}
\label{tab:2}
\end{table}

\begin{table}
\centering
\resizebox{\textwidth}{!}{
\begin{tabular}{llllclllclll}
\toprule
\multicolumn{1}{l}{$\nu = 0.5$}&\multicolumn{3}{c}{$\alpha =  1 $}&\multicolumn{1}{c}{}&\multicolumn{3}{c}{$\alpha =  0.8 $}&\multicolumn{1}{c}{}&\multicolumn{3}{c}{$\alpha =  0.6 $}\tabularnewline
\cline{2-4} \cline{6-8} \cline{10-12}
\multicolumn{1}{l}{}&\multicolumn{1}{c}{$M = 1$}&\multicolumn{1}{c}{$M = 2$}&\multicolumn{1}{c}{$M=3$}&\multicolumn{1}{c}{}&\multicolumn{1}{c}{$M = 1$}&\multicolumn{1}{c}{$M = 2$}&\multicolumn{1}{c}{$M=3$}&\multicolumn{1}{c}{}&\multicolumn{1}{c}{$M = 1$}&\multicolumn{1}{c}{$M = 2$}&\multicolumn{1}{c}{$M=3$}\tabularnewline
\midrule
\multicolumn{11}{c}{{\tt LIDAR}: SNR 20 dB}\\ \cmidrule{2-12}
~~$\sqrt{\alpha_{\ell_*}}$&\textbf{0.073}\hfill (6)&\textbf{0.062}\hfill (6)&\textbf{0.056}\hfill (6)&&\textbf{0.085}\hfill (6)&\textbf{0.07}\hfill (6)&\textbf{0.063}\hfill (6)&&\textbf{0.104}\hfill (6)&\textbf{0.085}\hfill (6)&\textbf{0.075}\hfill (6)\tabularnewline
~~$4\sqrt{\alpha_{\ell_*}}$&0.094\hfill (6)&0.081\hfill (6)&0.073\hfill (6)&&0.103\hfill (6)&0.088\hfill (6)&0.08\hfill (6)&&0.122\hfill (6)&0.1\hfill (6)&0.09\hfill (6)\tabularnewline
~~{\small WaveD}&0.083\hfill (6)&0.082\hfill (6)&0.083\hfill (6)&&0.086\hfill (6)&0.086\hfill (6)&0.086\hfill (6)&&0.105\hfill (6.1)&0.106\hfill (6.2)&0.107\hfill (6.2)\tabularnewline
\midrule
\multicolumn{11}{c}{{\tt Doppler}: SNR 20 dB}\\ \cmidrule{2-12}
~~$\sqrt{\alpha_{\ell_*}}$&\textbf{0.059}\hfill (7)&\textbf{0.053}\hfill (7)&\textbf{0.051}\hfill (7)&&\textbf{0.067}\hfill (7)&\textbf{0.058}\hfill (7)&\textbf{0.054}\hfill (7)&&0.084\hfill (6.9)&\textbf{0.067}\hfill (7)&\textbf{0.061}\hfill (7)\tabularnewline
~~$4\sqrt{\alpha_{\ell_*}}$&0.076\hfill (7)&0.061\hfill (7)&0.056\hfill (7)&&0.082\hfill (7)&0.065\hfill (7)&0.059\hfill (7)&&0.092\hfill (6.9)&0.071\hfill (7)&0.063\hfill (7)\tabularnewline
~~{\small WaveD}&0.065\hfill (7)&0.064\hfill (7)&0.064\hfill (7)&&0.067\hfill (7)&0.067\hfill (7)&0.067\hfill (7)&&\textbf{0.078}\hfill (7)&0.078\hfill (7)&0.078\hfill (7)\tabularnewline
\midrule
\multicolumn{11}{c}{{\tt Bumps}: SNR 20 dB}\\ \cmidrule{2-12}
~~$\sqrt{\alpha_{\ell_*}}$&\textbf{0.457}\hfill (6)&\textbf{0.455}\hfill (6)&\textbf{0.453}\hfill (6)&&0.461\hfill (6)&0.457\hfill (6)&0.455\hfill (6)&&0.467\hfill (6)&0.46\hfill (6)&0.458\hfill (6)\tabularnewline
~~$4\sqrt{\alpha_{\ell_*}}$&0.457\hfill (6)&0.456\hfill (6)&0.455\hfill (6)&&0.461\hfill (6)&0.457\hfill (6)&0.456\hfill (6)&&0.467\hfill (6)&0.46\hfill (6)&0.458\hfill (6)\tabularnewline
~~{\small WaveD}&0.457\hfill (6)&0.457\hfill (6)&0.457\hfill (6)&&\textbf{0.441}\hfill (6.1)&\textbf{0.429}\hfill (6.2)&\textbf{0.418}\hfill (6.3)&&\textbf{0.332}\hfill (6.9)&\textbf{0.319}\hfill (7)&\textbf{0.318}\hfill (7)\tabularnewline
\midrule
\multicolumn{11}{c}{{\tt Blocks}: SNR 20 dB}\\ \cmidrule{2-12}
~~$\sqrt{\alpha_{\ell_*}}$&\textbf{0.494}\hfill (5)&\textbf{0.488}\hfill (5)&\textbf{0.486}\hfill (5)&&\textbf{0.505}\hfill (5)&\textbf{0.494}\hfill (5)&\textbf{0.49}\hfill (5)&&0.807\hfill (4)&0.801\hfill (4)&0.8\hfill (4)\tabularnewline
~~$4\sqrt{\alpha_{\ell_*}}$&0.506\hfill (5)&0.493\hfill (5)&0.489\hfill (5)&&0.519\hfill (5)&0.501\hfill (5)&0.494\hfill (5)&&0.811\hfill (4)&0.803\hfill (4)&0.801\hfill (4)\tabularnewline
~~{\small WaveD}&0.497\hfill (5)&0.497\hfill (5)&0.497\hfill (5)&&0.506\hfill (5)&0.506\hfill (5)&0.506\hfill (5)&&\textbf{0.527}\hfill (5)&\textbf{0.528}\hfill (5)&\textbf{0.528}\hfill (5)\tabularnewline
\bottomrule
\end{tabular}}
\caption{RMSE for estimates when $\nu = 0.5$ at mild levels of strong dependence when the number of channels ($M$) increases.}
\label{tab:3}
\end{table}


\begin{table}[htp]
\centering
\resizebox{\textwidth}{!}{
\begin{tabular}{llllclllclll}
\toprule
\multicolumn{1}{l}{$\nu = 0.5$}&\multicolumn{3}{c}{$\alpha =  0.5 $}&\multicolumn{1}{c}{}&\multicolumn{3}{c}{$\alpha =  0.3 $}&\multicolumn{1}{c}{}&\multicolumn{3}{c}{$\alpha =  0.1 $}\tabularnewline
\cline{2-4} \cline{6-8} \cline{10-12}
\multicolumn{1}{l}{}&\multicolumn{1}{c}{$M = 1$}&\multicolumn{1}{c}{$M = 2$}&\multicolumn{1}{c}{$M=3$}&\multicolumn{1}{c}{}&\multicolumn{1}{c}{$M = 1$}&\multicolumn{1}{c}{$M = 2$}&\multicolumn{1}{c}{$M=3$}&\multicolumn{1}{c}{}&\multicolumn{1}{c}{$M = 1$}&\multicolumn{1}{c}{$M = 2$}&\multicolumn{1}{c}{$M=3$}\tabularnewline
\midrule
\multicolumn{11}{c}{{\tt LIDAR}: SNR 20 dB}\\ \cmidrule{2-12}
~~$\sqrt{\alpha_{\ell_*}}$&\textbf{0.11}\hfill (5)&\textbf{0.094}\hfill (5)&\textbf{0.087}\hfill (5)&&\textbf{0.142}\hfill (5)&\textbf{0.115}\hfill (5)&\textbf{0.103}\hfill (5)&&0.215\hfill (4)&\textbf{0.184}\hfill (4)&\textbf{0.172}\hfill (4)\tabularnewline
~~$4\sqrt{\alpha_{\ell_*}}$&0.134\hfill (5)&0.109\hfill (5)&0.097\hfill (5)&&0.163\hfill (5)&0.128\hfill (5)&0.113\hfill (5)&&\textbf{0.214}\hfill (4)&0.185\hfill (4)&0.173\hfill (4)\tabularnewline
~~{\small WaveD}&0.134\hfill (6.5)&0.14\hfill (6.7)&0.142\hfill (6.9)&&0.243\hfill (7)&0.246\hfill (7)&0.246\hfill (7)&&0.399\hfill (7)&0.401\hfill (7)&0.4\hfill (7)\tabularnewline
\midrule
\multicolumn{11}{c}{{\tt Doppler}: SNR 20 dB}\\ \cmidrule{2-12}
~~$\sqrt{\alpha_{\ell_*}}$&0.104\hfill (6)&0.097\hfill (6)&0.094\hfill (6)&&0.122\hfill (6)&0.107\hfill (6)&0.101\hfill (6)&&0.183\hfill (5)&0.166\hfill (5)&0.16\hfill (5)\tabularnewline
~~$4\sqrt{\alpha_{\ell_*}}$&0.106\hfill (6)&0.098\hfill (6)&0.095\hfill (6)&&\textbf{0.115}\hfill (6)&\textbf{0.103}\hfill (6)&\textbf{0.099}\hfill (6)&&\textbf{0.172}\hfill (5)&\textbf{0.161}\hfill (5)&\textbf{0.156}\hfill (5)\tabularnewline
~~{\small WaveD}&\textbf{0.091}\hfill (7)&\textbf{0.092}\hfill (7)&\textbf{0.092}\hfill (7)&&0.14\hfill (7)&0.141\hfill (7)&0.141\hfill (7)&&0.216\hfill (7)&0.218\hfill (7)&0.218\hfill (7)\tabularnewline
\midrule
\multicolumn{11}{c}{{\tt Bumps}: SNR 20 dB}\\ \cmidrule{2-12}
~~$\sqrt{\alpha_{\ell_*}}$&0.688\hfill (5.2)&0.643\hfill (5.3)&0.611\hfill (5.4)&&0.742\hfill (5)&0.736\hfill (5)&0.734\hfill (5)&&0.88\hfill (4)&0.873\hfill (4)&0.871\hfill (4)\tabularnewline
~~$4\sqrt{\alpha_{\ell_*}}$&0.689\hfill (5.2)&0.643\hfill (5.3)&0.611\hfill (5.4)&&0.743\hfill (5)&0.736\hfill (5)&0.734\hfill (5)&&0.88\hfill (4)&0.873\hfill (4)&0.871\hfill (4)\tabularnewline
~~{\small WaveD}&\textbf{0.334}\hfill (7)&\textbf{0.334}\hfill (7)&\textbf{0.334}\hfill (7)&&\textbf{0.387}\hfill (7)&\textbf{0.389}\hfill (7)&\textbf{0.389}\hfill (7)&&\textbf{0.487}\hfill (7)&\textbf{0.488}\hfill (7)&\textbf{0.488}\hfill (7)\tabularnewline
\midrule
\multicolumn{11}{c}{{\tt Blocks}: SNR 20 dB}\\ \cmidrule{2-12}
~~$\sqrt{\alpha_{\ell_*}}$&0.813\hfill (4)&0.805\hfill (4)&0.802\hfill (4)&&1.085\hfill (3)&1.078\hfill (3)&1.075\hfill (3)&&1.126\hfill (3)&1.098\hfill (3)&1.089\hfill (3)\tabularnewline
~~$4\sqrt{\alpha_{\ell_*}}$&0.819\hfill (4)&0.807\hfill (4)&0.803\hfill (4)&&1.086\hfill (3)&1.078\hfill (3)&1.075\hfill (3)&&1.129\hfill (3)&1.099\hfill (3)&1.089\hfill (3)\tabularnewline
~~{\small WaveD}&\textbf{0.548}\hfill (5)&\textbf{0.549}\hfill (5)&\textbf{0.549}\hfill (5)&&\textbf{0.625}\hfill (5)&\textbf{0.626}\hfill (5)&\textbf{0.625}\hfill (5)&&\textbf{0.796}\hfill (5)&\textbf{0.798}\hfill (5)&\textbf{0.8}\hfill (5)\tabularnewline
\bottomrule
\end{tabular}}
\caption{RMSE for estimates when $\nu = 0.5$ at severe levels of strong dependence when the number of channels ($M$) increases.}
\label{tab:4}
\end{table}

\noindent{\bf Comments and analysis}

The numerical study is considered for three particular contexts. Namely, the effect of the severity of LRD, the effect of multiple channels and the degree of ill-posedness. The method is also compared with using the standard {\tt WaveD} estimator on the `best channel' in the sense of the algorithm posed at the start of this Section. The results are contained in Tables \ref{tab:1} -- \ref{tab:4}. Simulations were conducted for a large range of noise levels with SNR = 10,15,20,25 and 30 dB but are omitted due to space constraints. The estimates at other noise levels showed similar results to those displayed here and did not add further to the concepts being discussed below.

Performance of our method (and the {\tt WaveD} method) is reliant on two key steps. The most important step is choosing the fine scale level $j_1$ to truncate the expansion at the highest allowable level before performance deteriorates. A less important but still crucial step is to choose the smoothing parameter $\zeta$ appropriately (the smoothing parameter $\eta$ for the {\tt WaveD} algorithm is fixed at its default of $\sqrt{6}$). 

To demonstrate both the role of $j_1$ and $\zeta$, the RMSE of the estimators in all the forthcoming contexts are presented inside the cells of the tables with the average fine scale level $\widehat j_1$ shown in parenthesis. The values of $\zeta$ are given in the first column (with {\tt WaveD} denoting the standard {\tt WaveD} estimator in the best possible channel).

Theoretical arguments suggest that $\zeta$ should be at least greater than $4 \sqrt{\alpha_{\ell_*}}$ for $p = 2$. Simulations were conducted for more liberal and conservative choices of $\zeta$ with $\zeta \in \left( \sqrt{\alpha_{\ell_*}}, 8\sqrt{\alpha_{\ell_*}} \right)$. In almost all cases, the performance was optimal using the smaller choice of $\zeta = \sqrt{\alpha_{\ell_*}}$. The exceptions generally being when the dependence was considerably strong ($\alpha < 0.3)$ and $M = 1$.

 As is consistent with \cite{Wishart-2013}, allowing higher scales can capture more transient features of a signal but can be at the cost of spurious effects of LRD noise being included. Sometimes early truncation can be beneficial to performance or detrimental to performance based on the features of the signal. For example, the estimation performance on the {\tt LIDAR } and {\tt Doppler} signals benefits from the earlier truncation but is detrimental to the estimation of the {\tt Bumps} and {\tt Blocks} signals. In the latter estimated signals, the captured transient features at higher scales outweigh the potential loss incurred from including spurious LRD noise effects. A potential reason that the {\tt LIDAR} signal is estimated well in the multichannel method in simulations compared to the similar {\tt Blocks} signal is the close proximity of the jumps combined with the early truncation (small $j_1$) in the expansion. The {\tt WaveD} does not truncate early to avoid the LRD effects and hence captures the jumps better (cf. Figures \ref{fig:1} and \ref{fig:2}). Finally in the {\tt Bumps} signal, the {\tt WaveD} method consistently outperformed the multichannel estimator (except with the liberal choice with $\zeta = 1$ when $\alpha_{\ell_*} = 1)$. This makes sense since the captured high frequency local features of the {\tt Bumps} signal used with a larger $j_1$ outweigh the loss incurred by spurious LRD effects. All of the aforementioned points are evident across Tables \ref{tab:1} -- \ref{tab:4} and shown visually as particular cases in Figures \ref{fig:1} and \ref{fig:2}.

Supporting the theory and being consistent with previous results in the literature, as the degree of ill-posedness increases ($\nu$ increases), the performance of estimation deteriorates. This is demonstrated by comparing results from Tables \ref{tab:1} -- \ref{tab:2} with the results in Tables \ref{tab:3} -- \ref{tab:4}. 

 In the same vein, as the level of dependence increases ($\alpha$ decreases), the performance deteriorates. Studying Tables \ref{tab:1} -- \ref{tab:4} in more detail, consider the effect of $\alpha$ while keeping $M$ fixed and $\nu$ fixed. As is consistent with the theoretical upper bound on rates of convergence established in Section \ref{sec:4}, the convergence rate deteriorates as the level of dependence increases ($\alpha$ decreases). 

The theory also suggests that the convergence rate only relies on the best available channel. However, numerically this doesn't seem to be the case. Interestingly, when keeping the dependence and DIP levels fixed across multiple channels, the inclusion of more channels (increasing $M$) generally results in improved estimation performance for the multichannel estimator in all signals while the {\tt WaveD} estimator has the same performance across multiple channels. This should not seem surprising since the {\tt WaveD} estimator is only used the `best channel' meaning only $n = 4096$ observations are being used each time. The multichannel estimator though is using a weighted average of all channels using 4096, 8192 and 12288 observations respectively in the $M = 1$, $2$ and $3$ scenarios.

\section{Conclusion}
\label{sec:6}
In this paper we considered multichannel deconvolution with errors following fractional Brownian motions, with different Hurst parameters. We established upper bounds on the $L^p$-risk $(1 \le p < \infty)$ for the non-linear wavelet estimators for regular-smooth and box-car convolutions and linear wavelet estimator for super-smooth convolutions. In particular, we extended the findings from \cite{DeCanditiis-Pensky-2006} and demonstrated that they are no longer valid in the LRD set-up. That is, in the box-car case adding new channels is beneficial for the upper bound only if the additional channel isn't outweighed by the dependence in the sense of $\widetilde \nu_*$ defined in \eqref{eq:box.car.optimal.nu} and the upper bound in \autoref{thm:main-2}. While in the regular-smooth case, adding new channels might perhaps improve the upper bound. An improved upper bound would arise if the $\alpha$ and DIP parameters in the new channel are better in the sense of \eqref{eq:optimal.smooth.channel}. In both regular-smooth and box-car cases though, LRD affects upper bounds which is consistent with previous findings in \cite{Wang-1997,Kulik-Raimondo-2009} and \cite{Wishart-2013}. In the super-smooth case, adding new channels is also beneficial, however, the upper bounds do not involve LRD.

We supported our theoretical findings by extensive simulations studies for the regular-smooth case using the $L^p$-risk for $p = 2$. We found that adding new channels improves performance, especially for severe levels of LRD. On the other hand, the optimal choice of threshold level was in some instances different than the one suggested by the theory. The optimal choice highly depends on the underlying signal. One has to remember though, that the established theory is \textit{asymptotic} in nature, whereas simulations studies are based on finite sample properties. This explains the aforementioned discrepancy.

A possible direction for future research is to explore and extend our upper bounds to minimax type rates towards the direction of \cite{benhaddou:etal} obtained for the $L^2$-risk in the discrete model when the number of channels, $M$,  also depend on the total number of observations $n$, i.e., $M=M_n$.
\section{Proofs}
\label{sec:7}

We provide technical details for the proofs of Theorems \ref{thm:main-1} and \ref{thm:main-2}. In the regular-smooth and box-car cases, the proofs are based on the maxiset theorem (see Theorem 6.1 in \cite{Kerkyacharian-Picard-2000}). The steps are similar to those of \cite{Johnstone-et-al-2004} and \cite{DeCanditiis-Pensky-2006}, with necessary modifications. In the super-smooth case we do not need the maxiset theorem but proceed according to \cite{Petsa-Sapatinas-2009} and consider the $L^p$-risk ($1 \le p < \infty$) directly.

\subsection{Stochastic analysis of estimated wavelet coefficients}
\label{sec:varnoisecoef}
By definition, it is clearly seen that the estimated wavelet coefficients have no bias.  Consider now the covariance structure of the $z_{\cdot \ell}$ process where $z_{m,\ell} = \int_\mathbb{R} \overline{e_m}(t) dB_{H_\ell}(t)$. It is assumed that, $B_{H_\ell}$ is independent of $B_{H_\ell'}$ for $\ell \neq \ell'$. This has the immediate consequence that, $\Cov{z_{m\ell}}{z_{m'\ell'}} = 0$ for $\ell \neq \ell'$. Using the results of Section 5.2 of \cite{Wishart-2013}, the covariance of the fBm coefficients within each channel is,
\begin{equation}
    \Cov{ z_{m\ell}}{ z_{m'\ell}} = |m m'|^{1/2 - {H_\ell}} \sum_{\kappa' } \psi^{\kappa'}_{m} \overline{\psi^{\kappa'}_{m'}},\label{eq:exactcov}
\end{equation}
where $\psi$ is the Meyer wavelet and $\kappa' = (j',k')$.

The result in \eqref{eq:exactcov} would seem to imply that the covariance matrix of $z_{m\ell}$ is non-trivial. However, applying \autoref{lem:dyadic}, the covariance matrix reduces to
\begin{equation}
  \Cov{z_{m\ell}}{ z_{m'\ell}} = |mm'|^{1/2 -H_\ell}\sum_{j \in \mathbb{Z}}\1{\left\{ \frac{m-m'}{2^j} \in \mathbb{Z} \right\}}\psi_{m2^{-j}} \overline{\psi_{m'2^{-j}}}.\label{eq:covZHsum}
\end{equation}
Thus we are in a position to bound the variance of the estimated wavelet coefficients (recall $\gamma_{m,\ell}$ are weighting
constants),
\begin{align}
    \Var \left(\widehat b_{\kappa} \right) & = \Var \left(b_{\kappa} +\sum_{m \in C_j} \sum_{\ell = 1}^M  \frac{\gamma_{m,\ell} n^{-\alpha_\ell/2}\sigma_\ell \overline{g_{m,\ell}}z_{m\ell}}{ \sum_{\ell=1}^M \gamma_{m,\ell}
|g_{m,\ell}|^2}\overline{\Psi^\kappa_{m}} , \right)\nonumber\\
                                               & = \sum_{\ell = 1}^M  \sum_{m,m' \in C_j}\frac{\gamma_{m,\ell}\gamma_{m',\ell}\sigma_\ell^2 n^{-\alpha_\ell} |mm'|^{1/2 -H_\ell}\overline{g_{m,\ell}}g_{m',\ell} \overline{\Psi^\kappa_{m}}{\Psi^\kappa_{m}}}{\left(  \sum_{\ell=1}^M \gamma_{m,\ell}|g_{m,\ell}|^2\right)\left(  \sum_{\ell=1}^M \gamma_{m',\ell}
|g_{m',\ell}|^2\right)} \nonumber\\
&\qquad \qquad \times \sum_{j' \in \mathbb{Z}}\1{\left\{ \frac{m-m'}{2^j} \in \mathbb{Z} \right\}}\psi_{m2^{-j'}} \overline{\psi_{m'2^{-j'}}},\label{eq:varBetaK}
\end{align}
where the second last line follows by \eqref{eq:covZHsum} and the independence of the fBms. Apply \autoref{lem:identity-covariance} to \eqref{eq:varBetaK} yields,
\begin{equation}
	\Var \left(\widehat b_{\kappa} \right) = \sum_{\ell = 1}^M  \sum_{m \in C_j}\frac{\gamma_{m,\ell}^2 \sigma_\ell^2 n^{-\alpha_\ell} |m|^{1 -2H_\ell}|g_{m,\ell}|^2 |\Psi^\kappa_{m}|^2}{\left(  \sum_{\ell=1}^M \gamma_{m,\ell}|g_{m,\ell}|^2\right)^2}. \label{eq:varBetaK-simple}
\end{equation}
Using the Cauchy Schwarz-inequality we have,
\begin{align*}
    \left( \sum_{\ell=1}^M  \gamma_{m,\ell} |g_{m,\ell}|^2 \right)^2 & \le \left( \sum_{\ell=1}^M  \gamma_{m,\ell}^2 \sigma_\ell^2 n^{-\alpha_\ell}|m|^{1-2H_{\ell}}|g_{m,\ell}|^2\right) \left( \sum_{\ell=1}^M \sigma_\ell^{-2}n^{\alpha_\ell} |m|^{2H_{\ell}-1}|g_{m,\ell}|^2\right)
\end{align*}
with equality only if $\gamma_{m,\ell} = \gamma_{m,\ell}^* \coloneqq  n^{\alpha_\ell}\sigma_\ell^{-2}|m|^{2H_\ell-1}$. Use these choice of optimal weights, $\gamma_{m,\ell}^*$, starting with the case of regular-smooth convolution,
\begin{align*}
    \Var \left(\widehat b_{\kappa} \right) &= \sum_{m \in C_j}|\Psi^\kappa_{m}|^2\sum_{\ell = 1}^M\frac{\gamma_{m,\ell}^2\sigma_\ell^2 n^{-\alpha_\ell} |m|^{1-2H_\ell}|g_{m,\ell}|^2}{\left(  \sum_{\ell=1}^M \gamma_{m,\ell}|g_{m,\ell}|^2\right)^2}\nonumber\\
        &= \sum_{m \in C_j}  |\Psi^\kappa_m |^2\left( \sum_{\ell=1}^M \sigma_\ell^{-2}n^{\alpha_\ell} |m|^{2H_{\ell}-1}|g_{m,\ell}|^2\right)^{-1}\nonumber\\
        & \le C \int_\mathbb{R}   |\Psi(x) |^2\, dx\, \left( \sum_{\ell = 1}^M n^{\alpha_\ell}\inf_{x \in C_j}|x|^{2H_{\ell}-1} \inf_{y \in C_j }|g_{y,\ell}|^2\right)^{-1} \nonumber\\
        & = \mathcal O \left( \left( \sum_{\ell = 1}^M n^{\alpha_\ell}2^{j(1-\alpha_\ell-2\nu_\ell)}\right)^{-1}\right)\nonumber\\
        & = \mathcal O \left( \min_{1\le \ell \le M}n^{-\alpha_\ell} 2^{-j(1-\alpha_\ell-2\nu_\ell)}\right).
\end{align*}
Consider the case of box-car convolution. In particular, for $x \in \mathbb{R}$ define the distance to the nearest integer, $\norm{x} \coloneqq \inf \left\{ |x - r|: r \in \mathbb{Z} \right\}$. Then bounds can be given on the box-car Fourier coefficients with,
\[
	\frac{2\norm{m c_\ell}}{\left|\pi mc_\ell \right|}  \le |g_{m, \ell}| \le \frac{\norm{m c_\ell}}{\left| mc_\ell \right|},
\]
(see for example, p.298 of \cite{DeCanditiis-Pensky-2006}). Using this bound with  the same optimal weights $\gamma_{m,\ell}^*$ and the bound $|\Psi_m^\kappa| \le 2^{-j}$ with \eqref{eq:varBetaK-simple},
\begin{align*}
    \Var \left(\widehat b_{\kappa} \right) &= \sum_{m \in C_j}  |\Psi^\kappa_m |^2\left( \sum_{\ell=1}^M \sigma_\ell^{-2}n^{\alpha_\ell} |m|^{2H_{\ell}-1}|g_{m,\ell}|^2\right)^{-1}\nonumber\\
        & \le \tfrac{2}{\pi}2^{-j} n^{-\alpha_*} \sum_{m \in C_j}  m^2 \left( \sum_{\ell=1}^M c_\ell^{-2}\sigma_\ell^{-2} |m|^{2H_{\ell}-1}\norm{mc_\ell}^2\right)^{-1}\nonumber\\
	& = \mathcal O \left(  2^{j(\alpha^*-2)} n^{-\alpha_*} \sum_{m \in C_j}  m^2 \left( \sum_{\ell=1}^M \norm{mc_\ell}^2\right)^{-1} \right)\nonumber\\
        & = \mathcal O \left( n^{-\alpha_*} j2^{j(1+\alpha^*+1/M)}\right).
\end{align*}
The last bound follows from a result in the proof of Lemma 4 in \cite{DeCanditiis-Pensky-2006} where,
\[
    \sum_{m \in C_j}  m^2 \left(  \sum_{\ell=1}^M  \norm{mc_\ell}^2\right)^{-1} = \mathcal O \left( j2^{j(3+1/M)}\right).
\]
Consider the final case of the super smooth convolution. Using similar arguments it can be shown,
\begin{align}
  \Var \left(\widehat a_{\kappa} \right) & = \sum_{m \in C_j}  |\Phi^\kappa_m |^2\left( \sum_{\ell=1}^M \sigma_\ell^{-2}n^{\alpha_\ell} |m|^{2H_{\ell}-1}|g_{m,\ell}|^2\right)^{-1}\nonumber\\
        & \le C\sum_{m \in C_j}  |\Phi^\kappa_m |^2\, \left( \sum_{\ell = 1}^M n^{\alpha_\ell}\inf_{x \in C_j}|x|^{2H_{\ell}-1} \inf_{y \in C_j }|g_{y,\ell}|^2\right)^{-1} \nonumber\\
        & = \mathcal O \left( \left( \sum_{\ell = 1}^M n^{\alpha_\ell}2^{j(1-\alpha_\ell)} \inf_{y \in C_j }|y|^{-2\gamma_\ell }e^{-2\theta_\ell |y|^{\beta_\ell}}\right)^{-1}\right)\nonumber\\
        & = \mathcal O \left( \min_{1\le \ell \le M}n^{-\alpha_\ell} 2^{-j(1-\alpha_\ell - 2 \nu_{\ell})} e^{2\theta_\ell 2^{j\beta_\ell}}\right).\label{eq:varbound-super-smooth}
\end{align}
\subsection{The maxiset theorem}
\label{sec:thm-maxi}

For completeness, we give the statement of the following theorem that is borrowed from Theorem 6.1 in \cite{Kerkyacharian-Picard-2000}. We also refer to Section \ref{sec:tem} below for the definition of the Temlyakov property. First, we introduce some notation: $\mu$ will denote the measure such that for $j\in \mathbb{N},\; k\in \mathbb{N}$ and $0 < q < p$,
\begin{equation*}
     \mu\{(j,k)\}=\|\tau_j\psi_{j,k}\|_p^p=
    \tau_j^p2^{j(\frac{p}{2}-1)}\|\psi\|_p^p, 
\end{equation*}
\begin{equation*}
     l_{q,\infty}(\mu)=\left\{ f \in L^p,\; \sup_{\lambda>
    0}\lambda^q \mu\{(j,k) : \;
    |b_{j,k}|>\tau_j\lambda\}<\infty\right\}. 
\end{equation*}

\begin{theorem}
    \label{maxlp} Let $ p>1$, $0<q<p$, $ \{\psi_{j,k}, j\ge -1,\; k=0,1,\ldots,2^j\} $ be a periodised wavelet basis of $L^2(T)$, $T=[0,1]$, and $\tau_{j}$ be a positive sequence such that the heteroscedastic basis $\tau_{j}\psi_{j,k}$ satisfies the Temlyakov property. Suppose that $\Lambda_n $ is a set of pairs $(j,k)$ and that $c_n$ is a deterministic sequence tending to zero with
    \begin{equation}\label{eq:condlam}
        \sup_n\,\mu\{\Lambda_n\}\,c_n^{p}<\infty.
    \end{equation}
    If, for any $n$ and any pair  $\kappa=(j,k) \in \Lambda_n$, we have
    \begin{eqnarray}
        \label{eq:cond.1} \mathbb{E} |\widehat b_{\kappa} -  b_{\kappa} |^{2p}
        &\leq& C \,(\tau_{j}\,c_n)^{2p},
        \\
        \label{eq:cond.2} \PP \Big( |\widehat{b_{\kappa}}- b_{\kappa} |
        \geq \eta\, \tau_{j}\, c_n/2 \Big) &\leq& C \,( c_n^{2p}\wedge
        c_n^{4}),
    \end{eqnarray}
    for some positive constants $\eta$ and $C$, then, the wavelet based estimator
    \begin{equation}
        \widehat{f}_n(t)=\sum_{\kappa\in \Lambda_n}
        \,\widehat b_{\kappa}\1{\{|\widehat b_{\kappa}|\geq\,\eta\,\tau_{j}\,c_n\}}\psi_{\kappa}(t), \quad t \in T, \label{new}
    \end{equation}  is such that, for all positive integers $n$,
    \[
    \mathbb{E}  \|\widehat{ f_n} -f \|_{p}^p \leq C\, c_n^{p-q},
    \]
    if and only if
    \begin{equation}
        f(\cdot) \in l_{q,\infty}(\mu), \label{eq:maxi1}
    \end{equation}
    and \begin{equation} \sup_nc_n^{q-p} \| f-\sum_{\kappa\in\Lambda_n}
    {b_{\kappa}}\psi_{\kappa}\|_p^p<\infty. \label{eq:maxi2}
\end{equation}
\end{theorem}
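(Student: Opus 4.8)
The plan is to prove the equivalence in both directions, using the Temlyakov property of the heteroscedastic system $\tau_j\psi_{j,k}$ as the bridge between the $L^p$-norm of a wavelet expansion and a $\mu$-weighted sequence norm. Concretely, Temlyakov supplies the two-sided comparison $\|\sum_\kappa \theta_\kappa \psi_\kappa\|_p^p \asymp \sum_\kappa (|\theta_\kappa|/\tau_j)^p\,\mu\{(j,k)\}$, so that every $L^p$-risk contribution transfers to a sum over indices weighted by $\mu$, where the coefficient-wise hypotheses \eqref{eq:cond.1}--\eqref{eq:cond.2} and the behaviour of $b_\kappa$ can be exploited term by term. For the risk upper bound I will need the upper ($\lesssim$) half of this comparison, and for the reverse implication the lower ($\gtrsim$) half.

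For the sufficiency direction I would first split $\|\widehat f_n - f\|_p \le \|f - \sum_{\kappa\in\Lambda_n} b_\kappa\psi_\kappa\|_p + \|\sum_{\kappa\in\Lambda_n}(\widehat b_\kappa\1{\{|\widehat b_\kappa|\ge \eta\tau_j c_n\}} - b_\kappa)\psi_\kappa\|_p$. The first (bias) piece is controlled directly by the approximation hypothesis \eqref{eq:maxi2}. For the second, writing the threshold $t_\kappa = \eta\tau_j c_n$, I would decompose the per-coefficient error into the four standard regimes: (i) $|\widehat b_\kappa|\ge t_\kappa$ and $|b_\kappa|\ge t_\kappa/2$; (ii) $|\widehat b_\kappa|\ge t_\kappa$ and $|b_\kappa|< t_\kappa/2$; (iii) $|\widehat b_\kappa|< t_\kappa$ and $|b_\kappa|>2t_\kappa$; (iv) $|\widehat b_\kappa|< t_\kappa$ and $|b_\kappa|\le 2t_\kappa$. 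On (ii) and (iii) the triangle inequality forces the large deviation $|\widehat b_\kappa - b_\kappa|\ge \eta\tau_j c_n/2$ (and on (iii) additionally $|b_\kappa|\le 2|\widehat b_\kappa - b_\kappa|$), so after transferring by Temlyakov and taking expectations I bound the contribution by Cauchy--Schwarz, pairing the $2p$-th moment bound \eqref{eq:cond.1} with the square root of the deviation bound \eqref{eq:cond.2}; summing the resulting factor $c_n^p(c_n^p\wedge c_n^2)$ against $\mu\{\Lambda_n\}$ is admissible because $\sup_n\mu\{\Lambda_n\}c_n^p<\infty$ by \eqref{eq:condlam}, and collapses to $c_n^{p-q}$. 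On (i) I would combine \eqref{eq:cond.1} with the weak-type count $\mu\{|b_\kappa|>\tau_j\lambda\}\lesssim \lambda^{-q}$ from $f\in l_{q,\infty}(\mu)$ (at $\lambda\asymp c_n$) to produce the factor $c_n^{-q}$; on (iv) the bound $|b_\kappa|/\tau_j\le 2\eta c_n$ lets the same weak-type information give $\sum (|b_\kappa|/\tau_j)^p\1{\{|b_\kappa|/\tau_j\le 2\eta c_n\}}\mu\{(j,k)\}\lesssim c_n^{p-q}$. Collecting the four regimes yields $\mathbb{E}\|\widehat f_n - f\|_p^p\lesssim c_n^{p-q}$.

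For the necessity direction I would start from the assumed rate $\mathbb{E}\|\widehat f_n - f\|_p^p\le C c_n^{p-q}$. Condition \eqref{eq:maxi2} follows from the deterministic triangle inequality $\|f - \sum_{\kappa\in\Lambda_n}b_\kappa\psi_\kappa\|_p\le \|f - \widehat f_n\|_p + \|\widehat f_n - \sum_{\kappa\in\Lambda_n}b_\kappa\psi_\kappa\|_p$, raising to the $p$-th power and taking expectations: the left side is deterministic, the first right-hand term is the risk, and the second is supported on $\Lambda_n$ and bounded exactly as in the sufficiency argument, both being $O(c_n^{p-q})$. Establishing $f\in l_{q,\infty}(\mu)$ is the genuinely delicate step: for a fixed $\lambda$ I would choose $n$ with $c_n\asymp\lambda$ and argue, via the lower Temlyakov inequality together with the deviation control, that a large $\mu$-mass of indices with $|b_\kappa|>\tau_j\lambda$ would force a correspondingly large $L^p$-risk, contradicting the assumed rate unless $\lambda^q\mu\{|b_\kappa|>\tau_j\lambda\}$ stays bounded uniformly in $\lambda$.

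The main obstacle, and the point where the minimum in \eqref{eq:cond.2} is essential, is this necessity of $f\in l_{q,\infty}(\mu)$: one must convert an upper bound on the \emph{averaged} risk into a deterministic, uniform-in-$\lambda$ weak-type bound on the coefficient sequence, which requires a probabilistic lower bound showing the thresholded estimator cannot simultaneously recover many large coefficients while keeping the risk at rate $c_n^{p-q}$. The secondary difficulty is the bookkeeping in regimes (ii)--(iii), where the exponents must be tracked carefully so that $c_n^p(c_n^p\wedge c_n^2)\mu\{\Lambda_n\}$ really does collapse to $c_n^{p-q}$ rather than a slower rate; this is precisely what the $\wedge$ in \eqref{eq:cond.2} is tuned to guarantee.
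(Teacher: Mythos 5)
You should first be aware that the paper contains no proof of this statement at all: it is quoted verbatim from Theorem 6.1 of Kerkyacharian and Picard (2000) (``For completeness, we give the statement of the following theorem that is borrowed from...''), so your attempt can only be measured against that source's argument. Your sufficiency direction is a faithful reconstruction of it: the four-regime split at the threshold $\eta\tau_j c_n$, the Cauchy--Schwarz pairing of \eqref{eq:cond.1} with the square root of \eqref{eq:cond.2} on the deviation regimes (giving $(\tau_j c_n)^p(c_n^p\wedge c_n^2)$ per coefficient, absorbed by $\mu(\Lambda_n)\le Cc_n^{-p}$ from \eqref{eq:condlam}), the weak-type count $\mu\{|b_\kappa|>\tau_j\lambda\}\lesssim\lambda^{-q}$ at $\lambda\asymp c_n$ on regime (i), and the integral computation $\sum(|b_\kappa|/\tau_j)^p\1{\{|b_\kappa|\le 2\eta\tau_jc_n\}}\mu\{(j,k)\}\lesssim c_n^{p-q}$ on regime (iv). One claim is overstated, however: the Temlyakov property does \emph{not} supply the unrestricted two-sided equivalence $\|\sum_\kappa\theta_\kappa\psi_\kappa\|_p^p\asymp\sum_\kappa(|\theta_\kappa|/\tau_j)^p\mu\{(j,k)\}$ for arbitrary coefficient sequences --- for $p\neq2$ that would make the heteroscedastic system equivalent to the canonical $\ell^p$ basis, which fails already for the Haar system in $L^p$. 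The property gives the comparison only for sums with comparable coefficient moduli over a set, and the general case must be recovered by slicing the coefficients into dyadic magnitude levels; this is precisely how Kerkyacharian--Picard deploy it, and your term-by-term transfers need to be routed through that slicing.

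The genuine gaps are in the necessity direction. First, a circularity: you propose to derive \eqref{eq:maxi2} by bounding $\|\widehat f_n-\sum_{\kappa\in\Lambda_n}b_\kappa\psi_\kappa\|_p$ ``exactly as in the sufficiency argument,'' but regimes (i) and (iv) of that argument used $f\in l_{q,\infty}(\mu)$, which is not yet available at that point; you must establish \eqref{eq:maxi1} first and only then \eqref{eq:maxi2}, whereas your write-up takes them in the opposite order. Second, the mechanism you sketch for \eqref{eq:maxi1} --- that the estimator ``cannot simultaneously recover many large coefficients while keeping the risk at rate $c_n^{p-q}$'' --- cannot work as stated, because \eqref{eq:cond.1}--\eqref{eq:cond.2} are upper bounds only: nothing in the hypotheses prevents $\widehat b_\kappa$ from recovering arbitrarily many large coefficients essentially exactly, so no risk lower bound flows from large \emph{retained} coefficients. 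The correct mechanism is the opposite one: on the event $\{|\widehat b_\kappa-b_\kappa|\le\eta\tau_jc_n/2\}$, whose probability is at least $1/2$ for large $n$ by \eqref{eq:cond.2}, every coefficient with $|b_\kappa|\le\eta\tau_jc_n/2$ is necessarily thresholded away, so the risk dominates (via the lower Temlyakov inequality) the $\mu$-mass of near-threshold-but-killed coefficients, which at $\lambda\asymp c_n$ bounds $\mu\{\kappa\in\Lambda_n:\tau_j\lambda<|b_\kappa|\le\eta\tau_jc_n/2\}\lesssim\lambda^{-q}$; the indices outside $\Lambda_n$ are controlled separately through the bias lower bound $\mathbb{E}\|\widehat f_n-f\|_p^p\gtrsim\|\sum_{\kappa\notin\Lambda_n}b_\kappa\psi_\kappa\|_p^p$ (using unconditionality of the basis, since $\widehat f_n$ is supported on $\Lambda_n$); the uniform-in-$\lambda$ bound then follows by summing dyadic slices, choosing a different $n$ for each slice. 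Without these two repairs your necessity argument does not close.
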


This theorem identifies the `maxiset' of a general wavelet thresholding estimator of the form \eqref{new}. This is done  by using conditions \eqref{eq:maxi1} and \eqref{eq:maxi2} for an appropriate  choice of $q$. In the proofs of Theorems \ref{thm:main-1} and \ref{thm:main-2}, we will choose $q$ according to the {\em dense} or the {\em sparse} regions as follows
\begin{equation}
    \label{eq:q.dense} q=q_d:=\frac{(2\nu_*+1)p}{2s+2\nu_*+1}, \quad
    \hbox{if}\quad s\geq \frac{2\nu_*+1}{2}\Big(\frac{p}{\pi_0}-1\Big )
\end{equation}
\begin{equation}
    \label{eq:q.sparse} q=q_s:=\frac{(2\nu_*+1)
    p/2-1}{s-1/\pi_0+(2\nu_*+1)/2},\quad \quad \hbox{if}\quad s \leq
    \frac{2\nu_*+1}{2}\Big(\frac{p}{\pi_0}-1\Big ).
\end{equation}
We first verify \eqref{eq:condlam}. Consider first the case of regular-smooth convolutions. Using \eqref{eq:j1}, simple algebra shows that
\begin{align*}
\mu(\{\Lambda_n\}) &= \sum_{j\le j_1}\sum_{k=0}^{2^j-1}\mu\{(j,k)\}=\sum_{j\le j_1}2^j\mu\{(j,k)\} \nonumber \\
&= \mathcal O (1)\sum_{j\le j_1}2^j2^{j(p/2-1)}\tau_j^p=\mathcal O (1)\sum_{j\le
j_1}2^{jp(1/2+\nu_*)} \nonumber \\
&=\mathcal O (2^{j_1p(1/2+\nu_*)})=\mathcal O (c_n^{-p}), 
\end{align*}
where $c_n$ is given by \eqref{eq:cn}, since it is easily seen in this case that $\tau_j^2=\mathcal O (2^{2j\nu_*})$ with $\nu_*$ given by \eqref{eq:smooth.optimal.nu} (compare also with p. 306 of \cite{DeCanditiis-Pensky-2006}). A similar bound can be shown for the box-car case with $\nu_*$ replaced with $\widetilde \nu_*$ given by \eqref{eq:box.car.optimal.nu}.

We now verify \eqref{eq:cond.1} and \eqref{eq:cond.2}. Since the random variables $\widehat b_{\kappa}-b_{\kappa}$ follow a Gaussian distribution, the higher moment bounds
\eqref{eq:cond.1} follows from the variance inequality. Similarly, denoting $Z$ to be a standard Gaussian distributed random variable,
\begin{align*}
    \PP\left(|\widehat b_{\kappa}-b_{\kappa}|>\zeta\tau_j c_n/2 \right) &= 2\PP \left( Z \ge \frac{\zeta\sqrt{\log n}}{2} \right)\nonumber\\
        & \le \frac{4n^{-\zeta^2/8}}{\zeta \sqrt{\log n}}\nonumber \\
        &= O\big( c_n^4\wedge c_n^{2p}\big).
\end{align*}
as long as $\zeta> 2\sqrt{(p\vee 2)2\xi}$. This proves \eqref{eq:cond.2}.

\subsection{Temlyakov property}
\label{sec:tem}

As seen in Appendix A in \cite{Johnstone-et-al-2004}, the basis $\{\tau_j \psi_{j,k}(\cdot)\}$ satisfies the Temlyakov property as soon as
\begin{equation}
\label{tem:1-fan} \sum_{j \in \Lambda_n} 2^j\, \tau_j^2\leq
C\sup_{j \in \Lambda_n} \big ( 2^j \tau_j^2\big )
\end{equation}
and
\begin{equation}
\label{tem:2-fan} \sum_{j \in \Lambda_n} 2^{jp/2}\, \tau_j^p\leq C\,
\sup_{j \in \Lambda_n} \big ( 2^{jp/2} \tau_j^p\big ),\quad 1\leq p<2.
\end{equation}
Recall that $\tau_j^2=\mathcal O (2^{2\nu_*j})$ (regular-smooth convolutions) and $\tau_j^2=O\big(j2^{2\widetilde \nu_*j}\big)$ (box-car convolutions) with $\nu_*$ and $\widetilde \nu_*$ given by \eqref{eq:smooth.optimal.nu} and \eqref{eq:box.car.optimal.nu}. Hence, \eqref{tem:1-fan} and \eqref{tem:2-fan} are verified by direct calculations.

\subsection{Besov embedding and maxiset conditions}
\label{sec:imbII}

We recall that
\begin{equation}
\label{eq:Inc1} \Besov{\pi_0}{r}{s} \subseteq \Besov{p}{r}{s''},\quad
\hbox{if}\quad  \pi_0\geq p, \quad s\geq s'' .
\end{equation}
\begin{equation}
\label{eq:Inc2} \Besov{\pi_0}{r}{s} \subseteq \Besov{p}{r}{s''},\quad
\hbox{if} \quad \pi_0 < p, \quad s-1/\pi_0 = s''-1/p.
\end{equation}
For both {\em dense} (\ref{eq:q.dense}) and {\em sparse}
(\ref{eq:q.sparse}) regions, we look for a Besov scale $\delta$ such
that 
 $\Besov{\pi_0}{r}{\delta} \subseteq l_{q,\infty}.$ 
As usual, we note that it is easier to work with
\[
l_q(\mu)=\left\{f(\cdot) \in L^p(T):~f=\sum_{j,k}b_{j,k}\psi_{j,k} \;\;\text{such that}\;\; \sum_{j,k\in
A_j}\frac{|\beta_{jk}|^q}{\tau_j^q}\left\|\tau_j\psi_{j,k}\right\|_p^p<\infty\right\},
\]
where $A_j$ is a set of cardinality proportional to $2^j$. Using
(\ref{eq:sigma.j}) and the fact that
\[
\left\|\tau_j\psi_{j,k}\right\|_p^p=\tau_j^p\,
2^{j(p/2-1)}=2^{j((2\nu_*+1) p/2-1)},
\]
we see that $f(\cdot)\in l_q(\mu)$ if,
\[
\sum_{j\geq 0} 2^{j ( (2\nu_*+1) p-2 -2\nu_*q)
/2}\sum_{k=0}^{2^j-1} |b_{j,k}|^q= \sum_{j\geq 0} 2^{jq \Big [
\frac{(2\nu_*+1)(p-q) }{2q}+\frac{1}{2}-\frac{1}{q}\Big ]
}\sum_{k=0}^{2^j-1} |b_{j,k}|^q <+\infty.
\]
From \eqref{eq:fBesov}, the latter condition holds when
\begin{equation}
\label{eq:delta.s} f(\cdot) \in \Besov{q}{q}{\delta}(T) \quad
\text{for} \quad \delta=\frac{(2\nu_*+1)}{2}\bigg
(\frac{p}{q}-1\bigg ).
\end{equation}
Now, depending on whether we are in the {\em dense} \eqref{eq:q.dense} or {\em sparse}  \eqref{eq:q.sparse} regions, we
look for $s$ and $\pi$ such that
\begin{equation}
\label{eq:bsvEMB} \Besov{\pi_0}{r}{s}  \subseteq
\Besov{q}{q}{\delta}.
\end{equation}
This embedding can be found by exploiting the known monotonicity of Besov balls, namely for $0 < r \leq q$, $\Besov{\pi_0}{r}{s} \subseteq \Besov{\pi_0}{q}{s}$, along with \eqref{eq:Inc1} or \eqref{eq:Inc2}.

\noindent{\bf The dense region.} By definition \eqref{eq:q.dense} of $q=q_d$, we  have $s \ge (\nu_*+1/2)(p/\pi_0 - 1)$. Eliminate $p$ by substituting $p = q_d (2s + 2\nu_* +1)/(2\nu_*+1)$ yields $\pi_0\geq q_d$. Hence, \eqref{eq:bsvEMB} follows from \eqref{eq:Inc1} as long as $s\geq \delta=\frac{(2\nu_*+1)}{2}(\frac{p}{q}-1 )$, which is always true in this {\em dense} region since $\delta = s > 0$ when $q = q_d$.

\vskip2mm \noindent{\bf The sparse region.} Take $q=q_s$ and $\delta=\frac{(2\nu_*+1)}{2}\left(\frac{p}{q_s}-1\right)=(2\nu_*+1)\frac{sp-p/\pi_0+1}{(2\nu_*+1) p-2}$. We consider two cases. If $\pi_0>q_s$, we use the embedding \eqref{eq:Inc1}. We have to check that $s> \delta = (2\nu_*+1)\frac{sp-p/\pi_0+1}{(2\nu_*+1) p-2}$ which is equivalent to $s<\frac{(2\nu_*+1)}{2}\left(\frac{p}{\pi_0}-1\right)$, which is true in the {\em sparse} region. Note that we require $\delta>0$ which implies either $(i)$ $p > 2/(2\nu_*+1)$ and $s > 1/\pi_0 - 1/p$ or $(ii)$  $p < 2/(2\nu_*+1)$ and $s < 1/\pi_0 - 1/p$. The $(ii)$ scenario is impossible since $p< 2/(2\nu_*+1)$ and $s < 1/\pi_0 - 1/p$ is a contradiction of $s \ge 1/\pi_0 - \nu_* - 1/2$. On the other hand, by definition, when in the sparse phase, $1/\pi_0 - \nu_* - 1/2 < (\nu_*+1/2)(p/\pi_0 - 1)$ which implies $p > 2/(2\nu_*+1)$ and consequently verifies that $s > 1/\pi_0 - 1/p$ since $s > 1/\pi_0 - \nu_* - 1/2$. Thus we established \eqref{eq:bsvEMB} for $q_s<\pi_0<q_d$. By definition \eqref{eq:q.sparse} of $q=q_s$, if $\pi_0 \leq q_s$, the corresponding $\delta$ fulfils $s-1/\pi_0=s'-1/q$. In this case, \eqref{eq:Inc2} and \eqref{eq:delta.s} ensure that
\[
 \Besov{\pi_0}{r}{s}  \subseteq  \Besov{q}{q}{s'}\equiv l_q(\mu),
\]
as had to be proved.

To apply \autoref{maxlp}, \eqref{eq:maxi2} needs to be verified. Therefore we need to find a $\delta > 0$ such that for any $f \in \Besov{p}{r}{\delta}$, \eqref{eq:maxi2} is satisfied.
\begin{align*}
    c_n^{q-p} \norm{f - \sum_{j,k }b_{j,k}\Psi_{j,k}}_p^p = c_n^{q-p}2^{-j_1 \delta p} \norm{f}_{\Besov{p}{r}{\delta}} = c_n^{q-p+ 2\delta p/(2\nu_* + 1)} \norm{f}_{\Besov{p}{r}{\delta}}.
\end{align*}
The above is bounded uniformly in $n$ if we choose $\delta = 1/2(2\nu_* + 1)(1 - q/p)$. Now we need to find $s,\pi_0$ such that $\Besov{\pi_0}{r}{s}\subseteq \Besov{p}{r}{\delta}$.

Consider the first case $\pi_0 \ge p$. This case cannot occur in the sparse phase due to \eqref{eq:rates-sparse-smooth} and \eqref{eq:rates-sparse-boxcar} with the assumption that $s$ is positive. In the dense phase, use embedding \eqref{eq:Inc1} with $\gamma = \delta$ and $q = q_d$. Therefore, \eqref{eq:Inc1} holds if $s \ge 1/2(2\nu + \alpha)(1 - q_d/p)$. This implies,
\begin{align*}
    s &\ge 1/2(2\nu_* + 1)(1 - q_d/p)\\
    &= \frac{(2\nu_* + 1)s}{2s + 2\nu_* + 1},
\end{align*}
which always holds under the assumption that $s > 0$.

Now consider the dense case when $\pi_0 < p$. In this scenario use embedding \eqref{eq:Inc2} by defining $s - 1/\pi_0 = s'' - 1/p$ which ensures $\Besov{\pi_0}{r}{s} \subseteq \Besov{p}{r}{s''}$. Then complete the embedding using \eqref{eq:Inc1} (namely, $\Besov{p}{r}{s''} \subseteq \Besov{p}{r}{\delta}$) which requires $s'' \ge \delta$ with $q = q_d$ or equivalently after rearrangement, $2ss'' + (2\nu_* + 1)(1/p - 1/\pi_0) \ge 0.$ The left hand side is greater than $(s - 1/\pi_0)(p/\pi_0 - 1)(2\nu_* + 1) \ge 0$ when $s \ge (\nu _*+ 1/2)(p/\pi_0 -1)$ (which is true in the dense phase).

The last case to consider is the sparse case when $\pi_0 < p$. Again introduce a new Besov scale $s''$ defined with, $s - 1/\pi = s'' - 1/p$ and apply a similar argument to above which requires that, $s'' \ge \delta$ with $q = q_s$. This is satisfied if $s > 1/\pi_0$, which always holds.

\subsection{Proofs of \autoref{thm:main-1} and \autoref{thm:main-2}}

The proofs of Theorems \ref{thm:main-1} and \ref{thm:main-2} are a direct application of \autoref{maxlp} with $j_1$, $\zeta$, $\tau_j$, and $c_n$ of Section \ref{sec:2}. Combining results of Sections \ref{sec:tem} and \ref{sec:imbII}, we see that all conditions of \autoref{maxlp} are satisfied. Using the embedding results of Section \ref{sec:imbII}, we derive the rate exponent $\gamma = \gamma_S$ or $\gamma = \gamma_B$ given by \eqref{eq:rates-dense-smooth} and \eqref{eq:rates-dense-boxcar} respectively for smooth and boxcar convolutions for any $f(\cdot) \in \Besov{\pi_0}{r}{s}$ using \eqref{eq:q.dense} for $q$ when $s\geq \frac{(2\nu_*+1)}{2}(p/\pi_0-1)$ and the rate exponent $\gamma = \gamma_S$ and $\gamma  = \gamma_B$  given by \eqref{eq:rates-sparse-smooth} and \eqref{eq:rates-sparse-boxcar} respectively for smooth and boxcar convolutions for any $f(\cdot) \in \Besov{\pi_0}{r}{s}$ using \ref{eq:q.sparse} for  $q$ when $1/\pi_0 \leq s < \frac{(2\nu_*+1)}{2}(p/\pi_0-1)$, with $\nu_*$ given either by \eqref{eq:smooth.optimal.nu} (regular-smooth convolutions) or \eqref{eq:box.car.optimal.nu} (box-car convolutions).

For the super-smooth scenario in \autoref{thm:main-1} we appeal to the same arguments used in the proof of \cite[Theorem 4.2][]{Petsa-Sapatinas-2009}. Consider the moment bound directly with the estimator \eqref{eq:super-smooth.estimator},
\begin{align}
  \label{eq:main-3-1} \mathbb{E} \norm{\widehat f_n - f}^p_p &\le 2^{p-1} \mathbb{E} \norm{\sum_{k = 0}^{2^{j_0} - 1} \left(\widehat a_{j_0,k} - a_{j_0,k} \right)\Phi_{j_0,k}(t) }^p_p\\
    &\qquad \qquad + 2^{p-1} \norm{\sum_{j = j_0}^{\infty} \sum_{k = 0}^{2^j-1} b_{j,k}\Psi_{j,k}(t) }_p^p\nonumber
\end{align}
The two terms in \eqref{eq:main-3-1} can be bounded separately with \eqref{eq:varbound-super-smooth} and the scale level \eqref{eq:j0.super-smooth},
\begin{align}
  \mathbb{E} \norm{\sum_{k = 0}^{2^{j_0} - 1} \left(\widehat a_{j_0,k} - a_{j_0,k} \right)\Phi_{j_0,k}(t) }^p_p &\le C 2^{j_0(p/2-1)} \sum_{k = 0}^{2^{j_0}-1} \,  \mathbb{E} |\widehat a_{j_0,k} - a_{j_0,k}|^p\nonumber\\
  &\le C  n^{-\alpha_{\ell_*}p/2}2^{j_0p/2(\alpha_{\ell_*}+ 2\nu_{\ell_*})}e^{a_{\ell_*}p2^{j_0\beta_{\ell_*}}}\nonumber\\
   &\le C  n^{-\epsilon p/2}(\log n)^{p/2(\alpha_{\ell_*}+ 2\nu_{\ell_*})}\nonumber\\
  &= o((\log n)^{-s^*p/\beta_{\ell_*}}).\label{eq:main-3-2}
\end{align}
For the next term use the property of Besov spaces,
\begin{align}
  \norm{\sum_{j = j_0}^{\infty} \sum_{k = 0}^{2^j-1} b_{j,k}\Psi_{j,k}(t) }_p^p &\le \left( \sum_{j = j_0}^\infty C 2^{-j(s + 1/p - 1/\min (\pi_0,p))}  \right)^p \nonumber\\
  &= \mathcal O((\log n)^{-s^*p/\beta_{\ell_*}}).\label{eq:main-3-3}
\end{align}
The result of \eqref{eq:rate-super-smooth} follows combining the results \eqref{eq:main-3-1}, \eqref{eq:main-3-2} and \eqref{eq:main-3-3}.

\section*{Acknowledgements}

The authors would like to thank the the Editor and two anonymous reviewers whose comments and suggestions lead to an improved version and presentation of the paper.

 \appendix
 \label{sec:appendix}
\section{Meyer Wavelet Proofs}
\begin{lemma}
\label{lem:dyadic}
Let $\omega \in \mathbb{Z}$ and $j \in \mathbb{Z}^+$, then the following identity holds for the sum of the dyadic rationals on the complex unit circle.
\[
  \sum_{k = 0}^{2^j-1} e^{2\pi i \omega k 2^{-j}} = 2^j \1{\left\{ \tfrac{\omega}{2^j} \in \mathbb{Z} \right\}} 
\]
\end{lemma}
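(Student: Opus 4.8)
The plan is to recognise the sum as a finite geometric series in the root of unity $r \coloneqq e^{2\pi i \omega 2^{-j}}$, and to split into the two cases governed by whether $r = 1$, which happens exactly when $\omega/2^j \in \mathbb{Z}$.

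First I would record the basic fact that $r = 1$ if and only if $\omega 2^{-j} \in \mathbb{Z}$, since $e^{2\pi i \theta} = 1$ precisely when $\theta$ is an integer. In the case $\omega/2^j \in \mathbb{Z}$, every summand equals $r^k = 1$, so the sum collapses to $2^j$; this agrees with the right-hand side, where the indicator evaluates to $1$ and so $2^j \1{\{\omega/2^j \in \mathbb{Z}\}} = 2^j$.

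In the complementary case $\omega/2^j \notin \mathbb{Z}$, we have $r \neq 1$, and the standard finite geometric series formula applies, giving
\[
\sum_{k=0}^{2^j-1} r^k = \frac{r^{2^j}-1}{r-1}.
\]
The decisive simplification is that $r^{2^j} = e^{2\pi i \omega} = 1$, which holds because $\omega \in \mathbb{Z}$. Hence the numerator vanishes while the denominator $r - 1$ is nonzero, forcing the sum to be $0$, which again matches $2^j \1{\{\omega/2^j \in \mathbb{Z}\}} = 0$.

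There is no genuine obstacle: the statement is the familiar orthogonality (completeness) relation for the $2^j$-th roots of unity. The only step demanding any care is confirming $r \neq 1$ in the second case so that division by $r-1$ is legitimate, and this is immediate from the characterisation of when $e^{2\pi i \theta} = 1$. The hypothesis $j \in \mathbb{Z}^+$ guarantees $2^j$ is a genuine positive integer so the index range is well-defined, and $\omega \in \mathbb{Z}$ is exactly what is needed to ensure $r^{2^j} = 1$.
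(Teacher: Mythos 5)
Your proof is correct, but it takes a genuinely different route from the paper's. You treat the sum as a finite geometric series in the root of unity $r = e^{2\pi i \omega 2^{-j}}$: when $\omega/2^j \in \mathbb{Z}$ all terms are $1$, and otherwise $r \neq 1$ and the closed form $(r^{2^j}-1)/(r-1)$ vanishes because $r^{2^j} = e^{2\pi i \omega} = 1$ for integer $\omega$. The paper instead argues in the real domain: it splits the exponential into its cosine and sine parts and repeatedly halves the summation range, each halving producing a sign factor such as $\left(1 + (-1)^{\omega}\right)$ or $\left(1+(-1)^{s}\right)$ via identities like $\cos(\theta + \pi\omega) = (-1)^{\omega}\cos\theta$; the sum survives each stage only if the corresponding parity factor is nonzero, i.e.\ only if successive halves of $\omega$ remain even, which after $j$ iterations forces $\omega = C2^j$ for some $C \in \mathbb{Z}$, at which point the sum collapses to the single term $2^j$ (and the sine part to $0$). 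Your argument is shorter, avoids the case analysis on parities, and is more general --- it is the standard orthogonality relation for $N$-th roots of unity for \emph{any} modulus $N$, with nothing special about powers of two, whereas the paper's recursive-halving argument is tailored to the dyadic structure. What the paper's approach buys is that it stays entirely within real trigonometric identities of the kind reused in its Lemma~2 (the covariance identity for Meyer wavelet coefficients), so the two appendix proofs share their toolkit; but as a proof of this lemma in isolation, yours is the more economical and transparent one, and the single point requiring care --- legitimacy of dividing by $r-1$ --- is handled correctly by your characterisation of when $e^{2\pi i\theta} = 1$.
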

\begin{proof}
The proof relies on the trigonometric components (real and imaginary parts) of the complex exponential. Namely,
\[
	e^{2\pi i \omega k 2^{-j}} = \cos \left( 2\pi \omega k 2^{-j} \right) + i \sin \left( 2\pi \omega k 2^{-j} \right).
\]
The case when $\omega = 0$ follows immediately due to the identities that $\cos (0) = 1$ and $\sin(0) = 0$.
Consider now the case when $\omega \neq 0$. Starting with the real part, partition the summation into halves with,
\begin{align*}
	\sum_{k = 0}^{2^j-1}\cos \left(2\pi \omega k 2^{-j} \right) &= \sum_{k = 0}^{2^{j-1}-1}\cos \left(2\pi \omega k 2^{-j} \right) + \sum_{k = 2^{j-1}}^{2^j-1}\cos \left(2\pi \omega k 2^{-j} \right)\\
	&= \sum_{k = 0}^{2^{j-1}-1}\cos \left(2\pi \omega k 2^{-j} \right) + \sum_{k = 0}^{2^j-2^{j-1}-1}\cos \left(2\pi \omega k 2^{-j} + \pi \omega \right)\\
	&= \left( 1+(-1)^\omega\right)\sum_{k = 0}^{2^{j-1}-1}\cos \left(2\pi \omega k 2^{-j} \right).
\end{align*}
If $\omega$ is odd, $\omega = 2s+1$ for some $s \in \mathbb{Z}$ then the above result is zero. Therefore consider $\omega = 2s$ for some $s \in \mathbb{Z}$ ($\omega$ is even). 
\begin{align*}
	\sum_{k = 0}^{2^j-1}\cos \left(2\pi \omega k 2^{-j} \right) &= \left( 1+(-1)^\omega\right)\sum_{k = 0}^{2^{j-1}-1}\cos \left(2\pi \omega k 2^{-j} \right)\\
	&=  2\sum_{k = 0}^{2^{j-1}-1}\cos \left(2\pi \omega k 2^{-j} \right)\\
	&=  2 \left\{  \sum_{k = 0}^{2^{j-2}-1}\cos \left(2\pi \omega k 2^{-j} \right) + \sum_{k = 2^{j-2}}^{2^{j-1}-1}\cos \left(2\pi \omega k 2^{-j} \right)\right\}\\
	&=  2 \left\{  \sum_{k = 0}^{2^{j-2}-1}\cos \left(2\pi \omega k 2^{-j} \right) + \sum_{k =0}^{2^{j-2}-1}\cos \left(2\pi \omega k 2^{-j}  + \tfrac{\pi \omega}{2}\right)\right\}\\
	&= 2 \left\{  \sum_{k = 0}^{2^{j-2}-1}\cos \left(2\pi \omega k 2^{-j} \right) + \sum_{k =0}^{2^{j-2}-1}\cos \left(2\pi \omega k 2^{-j}  + \pi s\right)\right\}\\
	&= 2 (1 + (-1)^s ) \sum_{k = 0}^{2^{j-2}-1}\cos \left(2\pi \omega k 2^{-j} \right).
\end{align*}
If $s$ is odd then the above result is zero. This process can be repeated until we reach the last possible result where $\omega = C2^j$ for some $C \in \mathbb{Z}$ and,
\[ 
	\sum_{k = 0}^{2^j-1}\cos \left(2\pi \omega k 2^{-j} \right) = 2^j \1{\left\{ k = 0\right\}} \cos \left(2\pi \omega k 2^{-j} \right) = 2^j.
\]
A similar proof applies for the imaginary part except the final step has,
\[
\sum_{k = 0}^{2^j-1}\sin \left(2\pi \omega k 2^{-j} \right) = 2^j \1{\left\{ k = 0\right\}} \sin \left(2\pi \omega k 2^{-j} \right) = 0.\qedhere
\]
\end{proof}

\begin{lemma}
\label{lem:identity-covariance}
Let $(\phi,\psi)$ be the Meyer wavelet basis. That is, the mother Meyer wavelet defined in the Fourier domain with,
\begin{equation}
  \psi_m = \int_\mathbb{R} e^{-2\pi i m x} \psi (x) \, dx = e^{i \pi m} \begin{cases} \sin\left( \frac{\pi}{2} \nu(3|m|-1)  \right) & \quad \text{for } \frac{1}{3} \le |m| \le \frac{2}{3};\\
  \cos\left( \frac{\pi}{2} \nu\left(\frac{3}{2}|m|-1 \right)  \right) & \quad \text{for } \frac{2}{3} \le |m| \le \frac{4}{3};\\
  0 & \qquad \text{otherwise},
  \end{cases}\label{eq:FourierWavelet}
\end{equation}
where $\nu(x)$ is a polynomial that controls the vanishing moment properties of the wavelet basis. In particular, the Meyer wavelet has the defining property that the polynomial satisfies,
\begin{equation}
  \nu(x) + \nu(1 - x) = 1. \label{eq:MeyerPoly}
\end{equation}
Then the matrix $\boldsymbol{M} = \left( M_{m,m'} \right)_{m,m'\in C_j}$ defined with entries 
\[
   M_{m,m'}  = \sum_{j' \in \mathbb{Z}}\1{\left\{ \frac{m-m'}{2^j} \in \mathbb{Z} \right\}}\psi_{m2^{-j'}} \overline{\psi_{m'2^{-j'}}}
\]
is the identity matrix.
\end{lemma}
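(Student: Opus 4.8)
The plan is to prove the two halves of the identity $\boldsymbol{M}=I$, i.e.\ $M_{m,m'}=\delta_{m,m'}$, separately, by exploiting the fact that the Fourier transform $\psi_m$ in \eqref{eq:FourierWavelet} is supported on $\tfrac13\le|m|\le\tfrac43$. I read the summand as $\1{\{(m-m')/2^{j'}\in\mathbb{Z}\}}$, consistent with the passage from \eqref{eq:exactcov} to \eqref{eq:covZHsum} via \autoref{lem:dyadic}. The one structural observation that drives everything is this: since $3|m|$ is never a power of $2$ for an integer $m\ne0$, the dilate $m2^{-j'}$ lands in the support $[\tfrac13,\tfrac43]$ for \emph{exactly two} consecutive scales $j'$, and never on the support boundary (where $\psi$ vanishes). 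Hence for each fixed $m$ the factor $\psi_{m2^{-j'}}$ is nonzero for precisely two consecutive $j'$, so every sum over $j'$ below has at most two nonzero terms.

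First I would treat the diagonal $m=m'$, where the indicator is automatically $1$ and $M_{m,m}=\sum_{j'}|\psi_{m2^{-j'}}|^2$. On the overlap of two consecutive active scales the two surviving terms are $\sin^2(\tfrac{\pi}{2}\nu(\cdot))$ and $\cos^2(\tfrac{\pi}{2}\nu(\cdot))$ evaluated at the \emph{same} argument (the lower branch at one scale coinciding with the upper branch at the next), so they sum to $1$. This is the frequency-domain partition of unity of the Meyer wavelet and needs only the explicit form \eqref{eq:FourierWavelet} together with the normalisations $\nu(0)=0$, $\nu(1)=1$ implied by \eqref{eq:MeyerPoly}.

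Next, the off-diagonal $m\ne m'$. If the two-scale supports of $m$ and $m'$ are disjoint then every product $\psi_{m2^{-j'}}\overline{\psi_{m'2^{-j'}}}$ vanishes and $M_{m,m'}=0$, so assume they share a scale. When $m$ and $m'$ have the \emph{same} sign, on any common support scale $j'$ one has $0<|m-m'|<2^{j'}$, so the divisibility $2^{j'}\mid(m-m')$ fails and each term is individually zero. The substantive case is \emph{opposite} signs: setting $a=m>0$ and $b=-m'>0$, the support constraints together with $2^{j'}\mid(a+b)$ force $a+b\in\{2^{j'},2^{j'+1}\}$ and pin $m$ and $m'$ to the \emph{same} pair of consecutive scales, at both of which the divisibility holds. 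I would evaluate these two terms from \eqref{eq:FourierWavelet}: at each scale the $\nu$-arguments attached to the $m$- and $m'$-factors are complementary (of the form $x$ and $1-x$), so \eqref{eq:MeyerPoly} collapses each term to the common quantity $\sin(\tfrac{\pi}{2}\nu(x))\cos(\tfrac{\pi}{2}\nu(x))$ up to phase, while the phase factors combine into $e^{i\pi(m-m')2^{-j'}}$, which equals $+1$ at one scale and $-1$ at the other. The two terms are therefore negatives of one another and cancel, giving $M_{m,m'}=0$.

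The main obstacle is exactly this opposite-sign cancellation: unlike the same-sign case it is not a term-by-term vanishing but a genuine cancellation between adjacent scales, and it is the only place where both the defining symmetry \eqref{eq:MeyerPoly} and the phase factors $e^{i\pi m}$ of \eqref{eq:FourierWavelet} are indispensable. As a conceptual check, the whole identity is nothing but the Parseval relation $\sum_{\kappa'}\psi_m^{\kappa'}\overline{\psi_{m'}^{\kappa'}}=\innerp{e_{m'}}{e_m}=\delta_{m,m'}$ for the periodised Meyer basis, the coarse scaling-function contributions dropping out because $m,m'\in C_j$ sit above the low-frequency band; the explicit computation sketched above is the hands-on verification of this principle directly from the data given in the statement.
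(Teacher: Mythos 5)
Your proposal is correct and takes essentially the same route as the paper's proof: a case analysis over the two active dyadic scales of each frequency, in which the divisibility constraint forces either $m=m'$ (same sign, settled by the partition of unity $\sin^2+\cos^2=1$) or $m-m'=\pm 2^{j}$ or $\pm 2^{j+1}$ (opposite signs, settled by the phase factors $e^{i\pi(m-m')2^{-j'}}$ combined with the defining property $\nu(x)+\nu(1-x)=1$). The only difference is organizational: the paper arranges these cases as an explicit block decomposition of $\boldsymbol{M}$ with floor/ceiling cardinality computations, whereas you obtain the same dichotomies via the interval bound $0<|m-m'|<2^{j'}$ at a common scale, a mild streamlining rather than a genuinely different argument.
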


\begin{proof}
Using the definition of \eqref{eq:FourierWavelet} and considering $a \in \mathbb{Z}$ such that $(m-m')2^{-j} = a$ we can write,
\[
	\1{\left\{ \frac{m-m'}{2^j} \in \mathbb{Z} \right\}}\psi_{m2^{-j}} \overline{\psi_{m'2^{-j}}} = e^{i a\pi} f(|m2^{-j}|)f(|m'2^{-j}|)
\]
where $f$ is defined by the piece wise trigonometric functions given in \eqref{eq:FourierWavelet}. Recall the support of the Meyer wavelet at scale $j$ in the Fourier domain is $C_j$ defined in \eqref{eq:Cj}. Define a partition of the domain at level $j$ with the domain at the surrounding scales, $j-1$ and $j+1$ with,
\begin{align*}
  C_j \cap C_{j-1} & = \left\{ a \in \mathbb{Z} :  \frac{1}{3} \le |a2^{-j}| \le \frac{2}{3} \right\} \eqqcolon C_{j-1}^{\sin} \\
  C_j \cap C_{j+1} &= \left\{ a \in \mathbb{Z} :  \frac{2}{3} \le |a2^{-j}| \le \frac{4}{3} \right\} \eqqcolon C_{j+1}^{\cos}. 
\end{align*}
The sets are named $C_{j-1}^{\sin}$ and $C_{j+1}^{\cos}$ respectively since they refer to those trigonometric parts of the Meyer wavelet at scale $j$ respectively (see \eqref{eq:FourierWavelet}) and it is the domain where the coefficients are in both $C_j \cap C_{j-1}$ and $C_j \cap C_{j+1}$ respectively. To ease the tedious nature of the forthcoming argument, consider a particular ordering of the two sets $C_{j+1}^{\cos}$ and $C_{j-1}^{\sin}$.
\begin{align*}
	C_{j-1}^{\sin} &= \left\{ -\lfloor \tfrac{2^{j+1}}{3} \rfloor, -\lfloor \tfrac{2^{j+1}}{3} \rfloor+1,\ldots, -\lceil \tfrac{2^j}{3} \rceil, \lceil \tfrac{2^j}{3} \rceil,\lceil \tfrac{2^j}{3} \rceil + 1, \ldots, \lfloor \tfrac{2^{j+1}}{3} \rfloor\right\}.\\
C_{j+1}^{\cos} &= \left\{ -\lfloor \tfrac{2^{j+2}}{3} \rfloor , -\lfloor \tfrac{2^{j+2}}{3} \rfloor+1,\ldots, -\lceil \tfrac{2^{j+1}}{3} \rceil,\lceil \tfrac{2^{j+1}}{3} \rceil,\lceil \tfrac{2^{j+1}}{3} \rceil+1, \ldots, \lfloor \tfrac{2^{j+2}}{3} \rfloor\right\}.
\end{align*}
 Further partition these sets into the positive and negative parts with $C_{j-1}^{\sin+} \coloneqq C_{j-1}^{\sin} \cap \mathbb{Z}^+$, $C_{j-1}^{\sin-}\coloneqq C_{j-1}^{\sin} \cap \mathbb{Z}^-$,$C_{j+1}^{\cos+}\coloneqq C_{j-1}^{\cos} \cap \mathbb{Z}^+$ and $C_{j+1}^{\cos-}\coloneqq C_{j-1}^{\cos} \cap \mathbb{Z}^-$ where $\mathbb{Z}^+$ and $\mathbb{Z}^-$ are the positive and negative integers respectively. 

Write the matrix $\boldsymbol{M}$ in the following way,
\[ 
	\boldsymbol{M} = 
\begin{blockarray}{cc|c|c|c}
    & C_{j+1}^{\cos-} & C_{j-1}^{\sin-} & C_{j-1}^{\sin+}&  C_{j+1}^{\cos+} \\
    \begin{block}{c(c|c|c|c)}
    C_{j+1}^{\cos-} & \boldsymbol{E_1} & \boldsymbol{0} & \boldsymbol{0} &  \boldsymbol{E_2}\\
\cline{1-5}
    C_{j-1}^{\sin-} & \boldsymbol{0} & \boldsymbol{R_1} & \boldsymbol{R_2} &  \boldsymbol{0}\\
\cline{1-5}
    C_{j-1}^{\sin+} & \boldsymbol{0} & \boldsymbol{R_3} & \boldsymbol{R_4} &  \boldsymbol{0}\\
\cline{1-5}
    C_{j+1}^{\cos+} &  \boldsymbol{E_3} & \boldsymbol{0} & \boldsymbol{0} &  \boldsymbol{E_4}\\
    \end{block}
  \end{blockarray}
\]
where the outer sets denote the values of $m,m'$ inside the $\boldsymbol{M}$ matrix. With a slight abuse of notation we will refer to the elements of $\boldsymbol{M}$ using $m,m'\in C_j$. For example, the first element $M_{1,1}$(the top left matrix entry of $\boldsymbol{E_1}$) has $m = m' = -\lceil \tfrac{2^{j+1}}{3} \rceil$, the first element of $C_{j+1}^{\cos-}$.

The matrix $\boldsymbol{M}$ is composed of block matrix components where $\boldsymbol{0}$ denotes a matrix of zeros of appropriate size implied by the cardinalities of $C_{j+1}^{\cos-}$, $C_{j-1}^{\sin-}$, $C_{j-1}^{\sin+}$ and $C_{j+1}^{\cos+}$.  The zero matrices follow since a value $m \in C_j$ cannot be in both $C_{j-1}$ and $C_{j+1}$ since $C_{j-1} \cap C_{j+1} = \emptyset$.  The overall result follows by showing that the other block matrices simplify to the following: $\boldsymbol{E_1} =\boldsymbol{R_1}=\boldsymbol{R_4}=\boldsymbol{E_4} = \boldsymbol{I}$ and $\boldsymbol{E_2} =\boldsymbol{R_2}=\boldsymbol{R_3}=\boldsymbol{E_3} = \boldsymbol{0}$ where $\boldsymbol{I}$ is the identity matrix of appropriate size.  To show these results for each case, one needs to first consider the values of $m,m' \in C_j$ such that $m-m'$ is a factor of $2^k$ for $k \in \left\{ j,j-1,j+1 \right\}$ and then compute the sum $\sum_{k = j-1}^{j+1} \psi_{m2^{-k}} \overline{\psi_{m2^{-k}}}$ for these values. 

Before proceeding, some notation is defined. For $x \in \mathbb{R}$, let $\left\{ x\right\}$ denote the fractional part of $x$. Then we have,
\[
	\lfloor x \rfloor = x - \left\{ x \right\} \qquad \text{and} \qquad \lceil x \rceil = x +1 - \left\{ x \right\}.
\]

{\sl Case $\boldsymbol{R_1}$ and $\boldsymbol{R_4}$}\\
We will consider here only the case for $\boldsymbol{R_4}$, the case for $\boldsymbol{R_1}$ follows by symmetry. In this context, $m,m' \in C_{j-1}^{\sin +}$ where it is possible that $m = m'$ is a solution to $m-m' = s2^j$ with $s = 0.$ This is in fact the only solution since the cardinality of $C_{j-1}^{\sin +} < 2^{j-1}$. Indeed,
\begin{align*}
	 |C_{j-1}^{\sin+}|&= \lfloor \tfrac{2^{j+1}}{3} \rfloor-\lceil \tfrac{2^j}{3} \rceil +1\\
		&= \tfrac{2^{j+1}}{3} - \left\{ \tfrac{2^{j+1}}{3} \right\}- \tfrac{2^j}{3} - 1 +\left\{ \tfrac{2^j}{3}\right\}+1\\
		&= \frac{2^j-(-1)^j}{3} < 2^{j-1}.
\end{align*}
Therefore the only value of $s\in\mathbb{Z}$ such that $m-m' = s2^{j}$ or $m-m' = s2^{j-1}$ is $s = 0$ ($m = m'$). This scenario occurs along the diagonal of $\boldsymbol{R_4}$, therefore the off-diagonal elements are zero. Computing the diagonal elements, we have $m,m' \in C_{j-1}^{\sin+} \implies 2m,2m' \in C_{j+1}^{\cos+}$ and $\tfrac{m}{2},\tfrac{m'}{2} \notin C_j$. Therefore only the scales $j-1$ and $j$ are used in the summation. Consider these diagonal elements of $\boldsymbol{R_4}$ with,
\begin{align*}
	R_4 &= \1{\left\{ m,m' \in C_{j-1}^{\sin+} : m=m'\right\}}\left\{ \psi_{m2^{-j}} \overline{\psi_{m'2^{-j}}} + \psi_{m2^{-j+1}} \overline{\psi_{m'2^{-j+1}}}  \right\}\\
		&=  \1{\left\{ m,m' \in C_{j-1}^{\sin+} : m=m'\right\}}\psi_{m2^{-j}} \overline{\psi_{m'2^{-j}}} \\
		&\qquad \qquad + \1{\left\{ 2m,2m' \in C_{j+1}^{\cos+} : m=m'\right\}}\psi_{2m2^{-j}} \overline{\psi_{2m'2^{-j}}}   \\
		&= \1{\left\{ m = m'\right\}}\left( \sin^2\left( \frac{\pi}{2} \nu(3|m2^{-j}|-1)  \right) + \cos^2\left( \frac{\pi}{2} \nu(\tfrac{3}{2}|m2^{-j+1}|-1)\right)  \right) \\
		&= 1
\end{align*}
since $\sin^2\theta + \cos^2 \theta = 1$ for all $\theta \in \mathbb{R}$.

{\sl Case $\boldsymbol{R_2}$ and $\boldsymbol{R_3}$}\\
Similarly, we will consider here only the case for $\boldsymbol{R_3}$, the case for $\boldsymbol{R_2}$ follows by symmetry. In this context, $m\in C_{j-1}^{\sin +}$ and $m'\in C_{j-1}^{\sin -}$. Consider the values $m-m'$ along the main diagonal of $\boldsymbol{R_3}$ which are identical since the values $m\in C_{j-1}^{\sin +}$ and $m'\in C_{j-1}^{\sin -}$ are consecutive. The first diagonal element is when $m = \lfloor \tfrac{2^{j+1}}{3} \rfloor$ and $m' = -\lceil \tfrac{2^j}{3} \rceil$ yielding,
\begin{align*}
	 m-m' &= \lfloor \tfrac{2^{j+1}}{3} \rfloor+\lceil \tfrac{2^j}{3} \rceil \\
		&= \tfrac{2^{j+1}}{3} - \left\{\tfrac{2^{j+1}}{3} \right\}+\tfrac{2^j}{3} + 1 -\left\{ \tfrac{2^j}{3}\right\}\\
		&= 2^j - \left\{\tfrac{2^{j+1}}{3} \right\} + 1 -\left\{ \tfrac{2^j}{3}\right\}\\
		&= 2^j  - \left\{ 2^j-\tfrac{2^j}{3} \right\} - \left\{ \tfrac{2^j}{3} \right\}+1\\
		&=2^j -1 +1 = 2^j.
\end{align*}
Similarly the maximum and minimum distances are, $\max_{m \in C_{j-1}^{\sin+},m' \in C_{j-1}^{\sin-}}(m-m')= 2\lfloor \tfrac{2^{j+1}}{3} \rfloor$ and $\min_{m \in C_{j-1}^{\sin+},m' \in C_{j-1}^{\sin-}}(m-m')= 2\lceil \tfrac{2^j}{3} \rceil$. Therefore the range of possible distances between $m$ and $m'$ are of length $2 \left(\lfloor \tfrac{2^{j+1}}{3} \rfloor -  \lceil \tfrac{2^j}{3} \rceil\right) =\frac{2^{j+1}-2(-1)^j}{3} - 2 < 2^j$. Therefore $m-m' = s2^j$ for some $s \in \mathbb{Z}$ only on the diagonal and $s = 1$ in this case ($m = m' + 2^j$). Thus again,  $\boldsymbol{R_3}$ is a diagonal matrix. Computing these values we have, $m \in C_{j-1}^{\sin+} \implies 2m \in C_{j+1}^{\cos+}$ and $\tfrac{m}{2} \notin C_j$. Similar cases apply to $m'$. Therefore only the scales $j-1$ and $j$ are used in the summation. Consider these diagonal elements of $\boldsymbol{R_3}$,
\begin{align*}
	R_3 &= \1{\left\{ m \in C_{j-1}^{\sin+} ,m' \in C_{j-1}^{\sin-} : m=m'+2^j\right\}} \left\{  \psi_{m2^{-j}} \overline{\psi_{m'2^{-j}}} + \psi_{m2^{-j+1}} \overline{\psi_{m'2^{-j+1}}}  \right\}\\
		&=  \1{\left\{ m=m'+2^j\right\}} \psi_{m2^{-j}} \overline{\psi_{m'2^{-j}}} \1{\left\{ m \in C_{j-1}^{\sin+} ,m' \in C_{j-1}^{\sin-}\right\}}\\
		&\qquad  + \1{\left\{ m=m'+2^j\right\}} \psi_{2m2^{-j}} \overline{\psi_{2m'2^{-j}}}  \1{\left\{ 2m \in C_{j+1}^{\cos+} ,2m' \in C_{j+1}^{\cos-}\right\}} \\
		&= \1{\left\{ m=m' + 2^j\right\}} e^{i \pi(m-m')2^{-j}}\sin\left( \frac{\pi}{2} \nu(3|m2^{-j}|-1)  \right)\sin\left( \frac{\pi}{2} \nu(3|m'2^{-j}|-1) \right) \\
		&\qquad + \1{\left\{ m=m'+ 2^j\right\}} e^{i \pi(m-m')2^{-{j-1}}}\cos\left( \frac{\pi}{2} \nu(\tfrac{3}{2}|m2^{-j+1}|-1)\right)  \cos\left( \frac{\pi}{2} \nu(\tfrac{3}{2}|m'2^{-j+1}|-1)\right)\\
		&= -\1{\left\{ m=m' + 2^j\right\}} \sin\left( \frac{\pi}{2} \nu(3|m2^{-j}|-1)  \right)\sin\left( \frac{\pi}{2} \nu(3|m'2^{-j}|-1) \right) \\
		&\qquad \qquad +\1{\left\{ m=m'+ 2^j\right\}}\cos\left( \frac{\pi}{2} \nu(3|m2^{-j}|-1)\right)  \cos\left( \frac{\pi}{2} \nu(3|m'2^{-j}|-1)\right)\\
		&=\1{\left\{ m=m'+ 2^j\right\}} \cos\left( \frac{\pi}{2} \nu(3|m2^{-j}|-1)+ \frac{\pi}{2} \nu(3|m'2^{-j}|-1)\right).
\end{align*}
Exploit now the fact that $\1{\left\{  m \in C_{j-1}^{\sin+} ,m' \in C_{j-1}^{\sin-} \right\}}$ implying $m > 0$ and $m' < 0$  along with the defining property of the Meyer wavelet in \eqref{eq:MeyerPoly} with the specific choice $x = 3m2^{-j}-1$,
\begin{align*}
	R_3 &= \1{\left\{ m=m'+ 2^j\right\}} \cos\left( \frac{\pi}{2} \nu(3|m2^{-j}|-1)+ \frac{\pi}{2} \nu(3|m'2^{-j}|-1)\right)\\
		&= \1{\left\{ -m'2^{-j}=1-m2^{-j}\right\}} \cos\left( \frac{\pi}{2} \nu(3m2^{-j}-1)+ \frac{\pi}{2} \nu(-3m'2^{-j}-1)\right)\\
		&= \cos\left( \frac{\pi}{2} \nu(3m2^{-j}-1)+ \frac{\pi}{2} \nu(3-3m2^{-j}-1)\right)\\
		&= \cos \left( \frac{\pi}{2}\right) = 0.
\end{align*}
Therefore $\boldsymbol{R_3} = \boldsymbol{0}$.

{\sl Case $\boldsymbol{E_1}$ and $\boldsymbol{E_4}$}\\
Similarly, we will consider here only the case for $\boldsymbol{E_4}$, the case for $\boldsymbol{E_1}$ follows by symmetry. In this context, $m,m'\in C_{j+1}^{\cos +}$ and we apply a similar argument used in the cases for $\boldsymbol{R_1}$ and $\boldsymbol{R_4}$.  Again, $m = m'$ is the only solution to $m-m' = s2^j$ with $s = 0\in\mathbb{Z}.$ Indeed,
\begin{align*}
	 |C_{j+1}^{\cos+}|&= \lfloor \tfrac{2^{j+2}}{3} \rfloor-\lceil \tfrac{2^{j+1}}{3} \rceil +1\\
		&= \tfrac{2^{j+2}}{3} - \left\{ \tfrac{2^{j+2}}{3} \right\}- \tfrac{2^{j+1}}{3} - 1 +\left\{ \tfrac{2^{j+1}}{3}\right\}+1\\
		&= \frac{2^{j+1}-(-1)^{j+1}}{3} < 2^{j}.
\end{align*}
Therefore the only value of $s\in\mathbb{Z}$ such that $m-m' = s2^{j}$ or $m-m' = s2^{j+1}$ is $s = 0$ ($m = m'$). This scenario occurs along the diagonal of $\boldsymbol{E_4}$ which is therefore zero on the off-diagonal. Computing these values we have, $m,m' \in C_{j+1}^{\cos+} \implies \tfrac{m}{2},\tfrac{m'}{2} \in C_{j+1}^{\sin+}$ and $2m,2m' \notin C_j$. Therefore only the scales $j$ and $j+1$ are used in the summation. Consider these diagonal elements of $\boldsymbol{E_4}$ with,
\begin{align*}
	E_4 &= \1{\left\{ m,m' \in C_{j+1}^{\cos+} : m=m'\right\}}\left\{ \psi_{m2^{-j}} \overline{\psi_{m'2^{-j}}} + \psi_{m2^{-(j+1)}} \overline{\psi_{m'2^{-(j+1)}}}  \right\}\\
		&= \1{\left\{ m=m'\right\}} e^{i \pi(m-m')2^{-j}}\cos\left( \frac{\pi}{2} \nu(\tfrac{3}{2}|m2^{-j}|-1)\right)  \cos\left( \frac{\pi}{2} \nu(\tfrac{3}{2}|m'2^{-j}|-1)\right) \\
		&\quad + \1{\left\{ m=m'\right\}} e^{i \pi(m-m')2^{-(j+1)}}\sin\left( \frac{\pi}{2} \nu(3|m2^{-(j+1)}|-1)  \right)\sin\left( \frac{\pi}{2} \nu(3|m'2^{-(j+1)}|-1) \right)\\
		&= 1
\end{align*}
since $\sin^2\theta + \cos^2 \theta = 1$ for all $\theta \in \mathbb{R}$.

{\sl Case $\boldsymbol{E_2}$ and $\boldsymbol{E_3}$}\\
Lastly, consider the case for $\boldsymbol{E_3}$, the case for $\boldsymbol{E_2}$ follows by symmetry. In this context, $m\in C_{j+1}^{\cos +}$ and $m'\in C_{j+1}^{\cos -}$. Consider the differences $m-m'$ along the main diagonal of $\boldsymbol{E_3}$ which are identical since the values of $m\in C_{j+1}^{\cos +}$ and $m'\in C_{j+1}^{\cos -}$ are consecutive. The first diagonal element is when $m = \lceil \tfrac{2^{j+1}}{3} \rceil$ and $m' =-\lfloor \tfrac{2^{j+2}}{3} \rfloor$ yielding,
\begin{align*}
	 m-m' &= \lceil \tfrac{2^{j+1}}{3} \rceil+\lfloor \tfrac{2^{j+2}}{3} \rfloor \\
		&= \tfrac{2^{j+1}}{3} +1 - \left\{\tfrac{2^{j+1}}{3} \right\}+\tfrac{2^{j+2}}{3} -\left\{ \tfrac{2^{j+2}}{3}\right\}\\
		&= 2^{j+1} - \left\{\tfrac{2^{j+1}}{3} \right\}  -\left\{ \tfrac{2^{j+2}}{3}\right\}+ 1\\
		&= 2^{j+1}  - \left\{ \tfrac{2^{j+1}}{3} \right\} - \left\{ 2^{j+1}-\tfrac{2^{j+1}}{3} \right\}+1\\
		&=2^{j+1}-1 +1 = 2^{j+1}.
\end{align*}
Similarly the maximum and minimum distances are, $\max_{m \in C_{j+1}^{\cos+},m' \in C_{j+1}^{\cos-}}(m-m')= 2\lfloor \tfrac{2^{j+2}}{3} \rfloor$ and $\min_{m \in C_{j+1}^{\cos+},m' \in C_{j+1}^{\cos-}}(m-m')= 2\lceil \tfrac{2^{j+1}}{3} \rceil$. Therefore the range of possible distances between $m$ and $m'$ are of length $2 \left(\lfloor \tfrac{2^{j+2}}{3} \rfloor -  \lceil \tfrac{2^{j+1}}{3} \rceil\right) =\frac{2^{j+2}-2(-1)^{j+1}}{3} - 2 < 2^{j+1}$. Therefore $m-m' = s2^j$ for some $s \in \mathbb{Z}$ only on the diagonal and $s = 2$ in this case ($m = m' + 2^{j+1}$). Thus again,  $\boldsymbol{E_3}$ is a diagonal matrix. Computing these values we have, $m \in C_{j+1}^{\cos+} \implies \tfrac{m}{2} \in C_{j+1}^{\sin+}$ and $2m \notin C_j$. Similar cases apply to $m'$. Therefore only the scales $j$ and $j+1$ are used in the summation. Consider these diagonal elements of $\boldsymbol{E_3}$,
\begin{align*}
	E_3 &= \1{\left\{ m \in C_{j+1}^{\cos+} ,m' \in C_{j+1}^{\cos-}: m=m'+2^{j+1}\right\}} \left( \psi_{m2^{-j}} \overline{\psi_{m'2^{-j}}} + \psi_{m2^{-(j+1)}} \overline{\psi_{m'2^{-(j+1)}}}  \right)\\
		&= \1{\left\{ m=m' + 2^{j+1}\right\}} e^{i \pi(m-m')2^{-j}}\cos\left( \frac{\pi}{2} \nu(\tfrac{3}{2}|m2^{-j}|-1)\right)  \cos\left( \frac{\pi}{2} \nu(\tfrac{3}{2}|m'2^{-j}|-1)\right) \\
		&\qquad + \1{\left\{m=m' + 2^{j+1}\right\}} e^{i \pi(m-m')2^{-{j-1}}}\sin\left( \frac{\pi}{2} \nu(3|\tfrac{m}{2}2^{-j}|-1)  \right)\sin\left( \frac{\pi}{2} \nu(3|\tfrac{m'}{2}2^{-j}|-1) \right)\\
		&= -\1{\left\{ m=m' + 2^{j+1}\right\}} \sin\left(  \frac{\pi}{2} \nu(\tfrac{3}{2}|m2^{-j}|-1)  \right)\sin\left(  \frac{\pi}{2} \nu(\tfrac{3}{2}|m'2^{-j}|-1) \right) \\
		&\qquad \qquad +\1{\left\{m=m' + 2^{j+1}\right\}}\cos\left(  \frac{\pi}{2} \nu(\tfrac{3}{2}|m2^{-j}|-1)\right)  \cos\left(  \frac{\pi}{2} \nu(\tfrac{3}{2}|m'2^{-j}|-1)\right)\\
		&=\1{\left\{m=m' + 2^{j+1}\right\}} \cos\left(  \frac{\pi}{2} \nu(\tfrac{3}{2}|m2^{-j}|-1)+ \frac{\pi}{2} \nu(\tfrac{3}{2}|m'2^{-j}|-1)\right).
\end{align*}
Again, exploit the fact that $\1{\left\{  m \in C_{j+1}^{\cos+} ,m' \in C_{j+1}^{\cos-} \right\}}$ implying $m > 0$ and $m' < 0$  along with the Meyer polynomial property in \eqref{eq:MeyerPoly} with the specific choice $x = \tfrac{3}{2}m2^{-j}-1$,
\begin{align*}
	E_3 &= \1{\left\{m=m' + 2^{j+1}\right\}} \cos\left(  \frac{\pi}{2} \nu(\tfrac{3}{2}|m2^{-j}|-1)+  \frac{\pi}{2} \nu(\tfrac{3}{2}|m'2^{-j}|-1)\right)\\
		&= \1{\left\{ -m'2^{-j}=2-m2^{-j}\right\}} \cos\left(  \frac{\pi}{2} \nu(\tfrac{3}{2}m2^{-j}-1)+  \frac{\pi}{2} \nu(-\tfrac{3}{2}m'2^{-j}-1)\right)\\
		&= \cos\left( \frac{\pi}{2} \nu(\tfrac{3}{2}m2^{-j}-1)+ \frac{\pi}{2} \nu(3-\tfrac{3}{2}m2^{-j}-1)\right)\\
		&= \cos \left( \frac{\pi}{2}\right) = 0.
\end{align*}
Therefore $\boldsymbol{E_3} = \boldsymbol{0}$ which completes the proof.
\end{proof}





\end{document}